\def\namedlabel#1#2{\begingroup
    #2%
    \def\@currentlabel{#2}%
    \phantomsection\label{#1}\endgroup
}
\theoremstyle{plain}
\newtheorem{theorem}{Theorem}[section]
\newtheorem{lemma}[theorem]{Lemma}
\newtheorem{proposition}[theorem]{Proposition}
\newtheorem{fact}[theorem]{Fact}
\newtheorem*{fact*}{Fact}
\crefname{fact}{Fact}{Facts}
\newtheorem{corollary}[theorem]{Corollary}
\newtheorem*{claim*}{Claim}
\theoremstyle{definition}
\newtheorem{definition}[theorem]{Definition}
\newtheorem*{definition*}{Definition}
\newtheorem*{notation*}{Notation}
\theoremstyle{remark}
\newtheorem{remark}[theorem]{Remark}
\newtheorem*{remark*}{Remark}
\newtheorem{example}[theorem]{Example}
\newtheorem*{example*}{Example}
\newtheorem*{note*}{Note}
\newtheorem{question}[theorem]{Question}
\newtheorem*{question*}{Question}
\newtheorem{problem}[theorem]{Problem}
\numberwithin{equation}{section}
\newcommand{\N}{\mathbb{N}}
\newcommand{\Z}{\mathbb{Z}}
\newcommand{\Q}{\mathbb{Q}}
\newcommand{\Rr}{\mathbb{R}}
\newcommand{\vc}{\operatorname{VC}}
\newcommand{\vcd}{\operatorname{vc}}
\newcommand{\tp}{\operatorname{tp}}
\newcommand{\U}{\mathfrak{C}}
\newcommand{\opp}{\operatorname{opp}}
\newcommand{\Maj}{\operatorname{Maj}}
\providecommand{\Th}{\operatorname{Th}}
\providecommand{\set}[2]{\{#1 \mid #2\}}
\providecommand{\sequence}[2]{(#1)_{#2}}
\providecommand{\varsequence}[2]{(#1\mid #2)}
\renewcommand{\SS}{\mathcal{P}}
\providecommand{\phiopp}{{\phi^{\opp}}}
\newcommand{\defn}[1]{{\emph{#1}}}
\newcommand{\recall}[1]{{\it #1}}
\newcommand{\calF}{\mathcal F}
\providecommand{\eps}{\epsilon}
\newcommand{\card}{\mathbf{Card}}
\newcommand{\ravg}[1]{\operatorname{rAvg}#1}
\newcommand{\Npq}{N_{\operatorname{pq}}}
\begin{document}

\title{A definable $(p,q)$-theorem for NIP theories}
\author{Itay Kaplan}
\address{Einstein Institute of Mathematics, Hebrew University of
Jerusalem, 91904, Jerusalem Israel.}
\email{kaplan@math.huji.ac.il}

\thanks{This research was supported by the Israel Science Foundation (grants no. 1254/18 and 804/22).}

\subjclass[2020]{03C45, 03C95}

\begin{abstract}
We prove a definable version of Matou\v sek's $(p,q)$-theorem in NIP theories. This answers a question of Chernikov and Simon. We also prove a uniform version.

The proof builds on a proof of Boxall and Kestner who proved this theorem in the distal case, utilizing the notion of locally compressible types which appeared in the work of the author with Bays and Simon. 
\end{abstract}

\maketitle

\section{Introduction}
The aim of this paper is to prove a model-theoretic definable version of Matou\v sek's $(p,q)$-theorem in combinatorics \cite{Matousek} under the assumption that the theory is NIP. 

We begin by recalling Matou\v sek's theorem (a version of which was first proved for convex sets by Alon and Kleitman \cite{Alon-Kleitman}). The statement uses the notion of the (dual) VC-dimension of set systems. For the definitions see \cref{sec:Preliminaries} below. 

\begin{definition}
  Say that a set system $(X,\calF)$ has the \defn{$(p,q)$-property} for $q \leq p <\omega$ if for any $F \subseteq \calF$ of size $|F| = p$ there is some $F_0 \subseteq F$ such that $|F_0|=q$ and $\bigcap F_0 \neq \emptyset$.   
\end{definition}

\begin{fact}[The $(p,q)$-theorem]\cite{Matousek} \label{fact:pq theorem}
  There exists a function $\Npq : \N^2 \rightarrow  \N$ such that for any $q\leq p<\omega$, if $(X,\mathcal{F})$ is a finite set system with the $(p,q)$-property such that every $s\in \mathcal{F}$ is nonempty and $\vc^*(\mathcal{F})<q$, then there is $X_0 \subseteq X$ of size $|X_0| = \Npq(p,q)$ such that $X_0 \cap s \neq \emptyset$ for all $s\in \mathcal{F}$.
\end{fact}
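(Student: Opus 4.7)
The plan is to follow the standard Alon--Kleitman--Matou\v sek strategy, which decomposes the problem into three stages: a fractional Helly theorem, an LP-duality step producing a bounded fractional transversal, and an $\epsilon$-net argument. First I would establish a fractional Helly theorem for set systems of bounded dual VC-dimension: for any $\alpha > 0$ and $q \in \N$ there exists $\beta = \beta(\alpha,q) > 0$ such that if $(X,\calF)$ is a finite set system with $\vc^*(\calF) < q$ and at least an $\alpha$-fraction of the $q$-element subfamilies of $\calF$ have nonempty intersection, then some point $x \in X$ lies in at least $\beta |\calF|$ sets of $\calF$. The proof uses the Sauer--Shelah bound on the dual shatter function to locate a large pairwise-intersecting subfamily and then to force a heavily pierced point inside it.

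Next, I would exploit the $(p,q)$-property directly: since every $p$-element subfamily of $\calF$ contains an intersecting $q$-subfamily, a simple double-counting argument shows that a fixed positive fraction (depending only on $p$ and $q$) of all $q$-element subfamilies of $\calF$ have nonempty intersection. Applying the fractional Helly theorem yields a single point that pierces a $\beta_0(p,q)$-fraction of $\calF$, and the same statement persists for every subfamily $\calF' \subseteq \calF$ with $|\calF'| \geq p$, since both the $(p,q)$-property and the bound on $\vc^*$ are hereditary.

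From this hereditary ``one heavy point'' property, LP duality (as in Alon--Kleitman) produces a fractional transversal of $\calF$ of bounded weight $\tau^* = \tau^*(p,q)$; equivalently, a probability measure on $X$ assigning mass at least $1/\tau^*$ to every set of $\calF$. The Haussler--Welzl $\epsilon$-net theorem for set systems of VC-dimension less than $q$, applied with $\epsilon = 1/\tau^*$, then produces a transversal $X_0$ of size $O\bigl( q \tau^* \log \tau^* \bigr)$, which bounds $\Npq(p,q)$. I expect the main obstacle to be the first stage: the fractional Helly conclusion genuinely requires the VC hypothesis and is not a direct consequence of the Sauer--Shelah lemma alone; moreover, tracking the dependence of $\beta(\alpha,q)$ quantitatively is delicate. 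By contrast, once fractional Helly is available, the duality/$\epsilon$-net package in stage three is a standard (but technically careful) way to convert a fractional ``positive density'' statement into a piercing bound that is uniform in $|X|$ and $|\calF|$.
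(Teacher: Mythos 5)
The paper does not prove this statement: it is quoted as Fact~\ref{fact:pq theorem} and attributed directly to \cite{Matousek}, so there is no in-paper proof to compare against. Your outline is a recognizable summary of the Alon--Kleitman--Matou\v sek strategy (fractional Helly, then LP duality to a bounded fractional transversal, then an $\epsilon$-net), which is indeed how the cited source proceeds, and you correctly identify the fractional Helly theorem for bounded dual shatter function as the genuinely hard new ingredient.

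Two places in the outline would need repair before it becomes a complete proof. In the LP-duality step, the ``hereditary one heavy point'' property you record for subfamilies $\calF' \subseteq \calF$ is not quite the hypothesis that bounds $\nu^* = \tau^*$: one takes an optimal fractional matching, clears denominators, and must apply the heavy-point lemma to the resulting \emph{multiset}. Passing from the $(p,q)$-property (which concerns $p$ \emph{distinct} members) to a fractional-Helly hypothesis on a multiset requires an extra step --- for instance, observing that either some member already occupies a constant fraction of the multiset (and, being nonempty, is itself a heavy point), or else a constant fraction of the $p$-subtuples of the multiset consist of pairwise distinct sets, so the double-counting goes through. In the $\epsilon$-net step you invoke Haussler--Welzl ``for set systems of VC-dimension less than $q$,'' but the hypothesis gives only $\vc^*(\calF) < q$; the $\epsilon$-net theorem needs control of the \emph{primal} dimension $\vc(\calF)$, and you need the duality bound (Fact~\ref{fact:dual}) to convert, so the constant in the final bound depends exponentially on $q$ rather than linearly. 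Neither issue is fatal, but both are real gaps in the argument as written.
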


Model theoretically, this implies that if $\phi(x,y)$ is NIP then for any $\vc^*(\phi)<q \leq p$ and $n :=\Npq(p,q)$, if $B$ is a finite set of $y$-tuples such that $\set{\phi(x,b)}{b\in B}$ has the $(p,q)$-property then there are $n$ elements $a_0,\dots,a_{n-1}$ such that for all $b\in B$ there is some $i<n$ for which $\phi(a_i,b)$ holds.

This theorem turned out to be tremendously useful in the model-theoretic study of NIP. For instance, it was instrumental in the proof of the uniform definability of types over finite sets (UDTFS) in NIP theories by Chernikov and Simon \cite{CS-extDef2}, in their study of definably amenable NIP groups \cite{definablyAmenable} and more recently in the proof that honest definitions exist uniformly for NIP formulas \cite{densecompression}. 

In order to phrase a definable version of the $(p,q)$-theorem, we use the following definition.

\begin{definition} \label{def:pair of formulas with the pq property}
  Let $M$ be a structure. Say that a pair of formulas $(\phi(x,y),\psi(y))$ over $M$ has the \defn{$(p,q)$-property} if $\calF := \set{\phi(x,b)}{b\in \psi(M)}$ is a family of nonempty sets with the $(p,q)$-property: for every choice of distinct $p$ elements $\calF$, some $q$ of them have a nonempty intersection. 
\end{definition}

It is not hard to see that for any structure $M$, a pair $(\phi(x,y), \psi(y))$ of formulas over $M$ has the $(p,q)$-property for some $p\geq q>\vc^*(\phi)$ iff for all $b\in \psi(\U)$, $\phi(x,b)$ does not divide over $M$ (this was proved in \cite[Lemma 2.4]{simon-invariant}, but see also \cref{lem:equivalence of non dividing formulas and pq} below). For a reminder of the definition of dividing and forking, see \cref{sec:Preliminaries}. 

In light of this, \cite[Proposition 25]{CS-extDef2} formulates the following corollary of the $(p,q)$-theorem (see also \cite[beginning of Section 2]{simon-dpmin}).

\begin{fact}\label{fact:model theoretically pq}
  Suppose that $T$ is NIP and that $M\vDash T$. Assume that $\phi(x,y)$ and $\psi(y)$ are formulas over $M$ and that $(\phi,\psi)$ has the $(p,q)$-property for $\vc^*(\phi)<q\leq p$. Then there are sets $W_0,\dots,W_{n-1} \subseteq S^y(M)$ for $n:=\Npq(p,q)$ such that $\bigcup_{i<n}W_i = \set{p \in S^y(M)}{\psi(y)\in p}$ and for each $i<n$, $\set{\phi(x,b)}{\tp(b/M)\in W_i}$ is consistent. 
\end{fact}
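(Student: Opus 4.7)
The plan is to derive the statement from Matou\v{s}ek's combinatorial $(p,q)$-theorem (\cref{fact:pq theorem}) via a compactness argument in the space of global coheirs over $M$. The strategy is to find an $n$-tuple $\bar q = (q_0, \dots, q_{n-1})$ of global types over $\U$, each finitely satisfiable in $M$, such that every $b \in \psi(\U)$ is covered by some $q_i$ (i.e.\ $\phi(x, b) \in q_i$). The pieces $W_i$ are then extracted from $\bar q$ using the fact that coheirs are $M$-invariant.

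After passing to a sufficiently saturated $M$ and a monster $\U \succeq M$, the $(p,q)$-property extends from $\psi(M)$ to $\psi(\U)$ by saturation. Let $X = S^x_{\fs}(\U,M)^n$ be the compact space of $n$-tuples of global types in $x$ over $\U$ that are finitely satisfiable in $M$; any such type is $M$-invariant, so the condition ``$\phi(x,b) \in q_i$'' depends only on $\tp(b/M)$. Given $\bar q \in X$ covering $\psi(\U)$ in the sense above, define
\[ W_i := \{\, p \in [\psi] : \phi(x,b) \in q_i \text{ for some (equivalently, every) } b \in \U \text{ realizing } p \,\}, \]
which is well-defined by invariance. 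The $W_i$'s cover $[\psi]$ (any $p$ has a realization $b \in \psi(\U)$ covered by some $q_i$, forcing $p \in W_i$), and each family $\{\phi(x,b) : \tp(b/M) \in W_i\}$ sits inside the consistent complete type $q_i$ (again by invariance), hence is consistent.

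The task reduces to finding such $\bar q$. By compactness of $X$ it is enough to verify the finite intersection property for the clopen sets $C_b = \{\bar q \in X : \exists i,\,\phi(x,b) \in q_i\}$, $b \in \psi(\U)$. Given a finite $B \subseteq \psi(\U)$, Matou\v{s}ek's theorem applied to the finite set system $(\U, \{\phi(x,b)(\U) : b \in B\})$ --- a nonempty family with the $(p,q)$-property and dual VC dimension less than $q$ --- yields witnesses $a_0^B, \dots, a_{n-1}^B \in \U$ such that every $b \in B$ is hit by some $a_i^B$. Partitioning $B = \bigsqcup_i B_i$ by $B_i = \{b \in B : \models \phi(a_i^B, b)\}$, it remains to find, for each $i$, a coheir $q_i^B \in S^x_{\fs}(\U,M)$ containing $\{\phi(x,b) : b \in B_i\}$.

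This last step is the main obstacle: such a coheir exists if and only if the finite partial type $\{\phi(x,b) : b \in B_i\}$ is finitely satisfiable in $M$, i.e., admits a realization $a \in M$ with $\phi(a,b)$ for all $b \in B_i$. Matou\v{s}ek's theorem only guarantees a realization $a_i^B \in \U$, and for $b \in \psi(\U) \setminus \psi(M)$ the $M$-slice $\phi(x,b)(M)$ can be empty, so there is a priori no $M$-realization to extract. Overcoming this gap is where the NIP hypothesis enters essentially: following Boxall--Kestner's approach in the distal case, one uses the machinery of locally compressible types from the author's joint work with Bays and Simon to force the combinatorial witnesses of Matou\v{s}ek to come from $M$ in a uniform, coheir-compatible way.
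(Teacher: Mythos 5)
The paper does not prove this statement: it is quoted as a cited result, attributed to Chernikov--Simon \cite[Proposition~25]{CS-extDef2} (cf.\ also Simon \cite{simon-dpmin}), so there is no in-paper proof to compare against. Evaluating your attempt on its own terms, it is not a proof: you set up a compactness scheme, honestly identify that the finite intersection step fails, and then wave at external machinery without actually carrying it out.

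Two concrete points. First, the gap you found is genuine. Working in $X = S^x_{\fs}(\U,M)^n$ requires, at the FIP step, that each cell $B_i$ of the Matou\v sek partition generate a type finitely satisfiable in $M$, and for $b \in \psi(\U)\setminus\psi(M)$ one can have $\phi(M,b)=\emptyset$ even though $\phi(x,b)$ does not fork over $M$ (e.g.\ in DLO with a dense--codense unary predicate $P$, take $\phi(x,y)=P(x)\wedge x>y$ and $b$ above $M$). So the transversal produced by \cref{fact:pq theorem} need not be reflected in $M$, and the coheir-lift simply does not go through as written. Trying to repair it by working with $M$-invariant types instead of coheirs trades this problem for another: one must then verify the $(p,q)$-property for the family $\{[\phi(x,b)]\cap S^x_{M\text{-inv}}(\U)\}_{b\in\psi(\U)}$, i.e.\ that a consistent conjunction of $q$ non-forking instances of $\phi$ is itself non-forking over $M$, which is not automatic. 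Either way there is a missing argument, and your write-up never supplies it.

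Second, the proposed fix is misattributed. \cref{fact:model theoretically pq} is a 2015 result of Chernikov--Simon; the machinery of locally compressible types (\cite{densecompression}) and the Boxall--Kestner scheme (\cite{DistalDefinablePQ}) is what this paper uses to prove the strictly harder, \emph{definable} versions (\cref{thm:the main theorem intro}, \cref{thm:full definable version intro}, \cref{thm:uniform definable pq intro}), not this fact. If \cref{fact:model theoretically pq} really required the paper's new tools, the paper could not have cited it as a known prerequisite. You should look for the correct, more elementary argument in \cite{CS-extDef2} rather than assuming the hard machinery is needed here; the two statements are not interchangeable, since the definable version replaces arbitrary sets $W_i\subseteq S^y(M)$ by formula-defined ones. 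A minor additional issue: ``after passing to a sufficiently saturated $M$'' is not available, as $M$ is part of the data of the statement and cannot be enlarged; also the $(p,q)$-property transfers from $\psi(M)$ to $\psi(\U)$ by elementarity, not by saturation.
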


For example consider the family $\calF$ of rays in DLO (i.e., $\Th(\Q,<)$): sets defined by $x>a$ or $x<a$. It is easy to see that the dual VC-dimension of $\calF$ is $1$ (given any two rays, if they intersect, then their union is everything). In the context of \cref{fact:model theoretically pq}, $\calF$ can be formalized by setting $\phi(x,y,z_1,z_2) = ((z_1 =z_2 \to x>y) \land (z_1 \neq z_2 \to x<y))$ and $\psi(y,z_1,z_2) = (y=y)$. Then $(\phi,\psi)$ has the $(3,2)$ property: every three rays must intersect. Given any model $M$, let $W_0$ be the set of types of pairs $(a,a)$ over $M$ and $W_1$ be the set of types of pairs $(a,b)$ over $M$ where $a\neq b$. In other words, we  cover $\calF$ by positive and negative rays.

In this paper we will prove a definable version of this fact. Here is the statement, essentially taken from \cite[below Fact 2.2]{simon-dpmin}:

\begin{theorem} \label{thm:full definable version intro}
  Suppose that $T$ is NIP, $M\vDash T$ and that $\phi(x,y),\psi(y)$ are formulas over $M$. Assume that $(\phi,\psi)$ has the $(p,q)$-property for $\vc^*(\phi)<q\leq p$. Then there are formulas $\psi_0(y),\dots,\psi_{n-1}(y)$ over $M$ such that $\psi(y)$ is equivalent to the disjunction $\bigvee_{i<n} \psi_i(y)$ and for each $i<n$, $\set{\phi(x,b)}{b \in \psi_i(M)}$ is consistent. 
\end{theorem}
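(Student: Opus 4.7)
My plan begins from the non-definable cover already produced by \cref{fact:model theoretically pq}: sets $W_0, \dots, W_{n-1} \subseteq S^y(M)$ with $\bigcup_i W_i = \set{p \in S^y(M)}{\psi \in p}$ and each $\set{\phi(x,b)}{\tp(b/M) \in W_i}$ consistent. For each $i$ I fix a realization $a_i \in \U$ of the $i$-th partial type. It then suffices to produce, for each $i$, a formula $\psi_i(y) \in L(M)$ satisfying: (a) $W_i \subseteq [\psi_i]$, i.e., $\psi_i \in q$ for every $q \in W_i$; (b) $\psi_i(y) \vdash \phi(a_i, y)$, so that $a_i$ witnesses the consistency of $\set{\phi(x,b)}{b \in \psi_i(M)}$; and (c) $\psi_i \vdash \psi$. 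Given (a)--(c), the equivalence $\bigvee_i \psi_i \equiv \psi$ follows from (a) and (c) together with the $W_i$ covering $[\psi]$.

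The real challenge is achieving (a) and (b) simultaneously. Condition (b) forces $\psi_i(\U)$ to sit inside the externally $M$-definable set $\phi(a_i, \U)$, while (a) demands that this definable set absorb every type in $W_i$. In the distal case, Boxall--Kestner supply such a formula directly: distality makes $\tp_\phi(a_i/M)$ fully compressible, and a single compression formula witnesses both (a) and (b). In NIP this is too strong, and my plan, following the author's earlier work with Bays and Simon, is to replace distality by \emph{local} compressibility: on a sufficiently ``dense'' family of $\phi$-types, a local compression formula over $M$ exists, and I would use it to construct $\psi_i$. The $(p,q)$-property of $(\phi,\psi)$, which passes to subfamilies, is what will force the compressed pieces collectively to cover the $W_i$.

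The main obstacle is precisely the tension between (a) (coverage, pushing $\psi_i$ to be \emph{large}) and (b) (consistency, pushing $\psi_i$ to be \emph{small}). I expect to handle this by an iterative refinement: extract a definable $\psi_0$ via a local compression, then apply \cref{fact:model theoretically pq} again to the residual piece $\psi \wedge \neg \psi_0$ (which still has the $(p,q)$-property and still has $\vc^* < q$, as these pass to subfamilies), and repeat. A suitable VC-theoretic or compressibility rank of the residual should strictly decrease at each step, yielding termination after finitely many steps in a bound depending only on $p$, $q$, and $\vc^*(\phi)$. This same bound on the number of iterations is what should produce the uniform version advertised in the abstract.
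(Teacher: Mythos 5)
Your plan is a genuinely different route from the paper's, but it has real gaps that the paper's route is designed to avoid.

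The paper does not try to definably approximate the sets $W_i$ from \cref{fact:model theoretically pq}. Instead it proves the ``pointwise'' statement \cref{thm:the main theorem intro}: for a single $b\in\psi(\U)$ with $\phi(x,b)$ non-forking over $M$, there is $\psi_b(y)\in\tp(b/M)$ with $\{\phi(x,c):c\in\psi_b(M)\}$ consistent. The global cover is then obtained by observing that the $[\psi_b]$ form an open cover of the compact space $[\psi]\subseteq S^y(M)$, so finitely many suffice. That compactness step replaces your iterative refinement entirely, and it is exactly what sidesteps the need for a decreasing rank.

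The gap in your plan is precisely where you say ``extract a definable $\psi_0$ via a local compression.'' This extraction \emph{is} the hard part, and it is the content of \cref{thm:the main theorem intro}. Local compressibility of $\phi$-types alone is not enough to produce it: the paper's \cref{prop:lemma as in Gareth Lotte} first constructs a specific two-sided Morley sequence in a strictly invariant coheir over $M$ (\cref{fact:non-coforking coheir}), uses \cref{lem:compressible eventually positive type} to find an eventually-positive $k$-compressible $\phi$-type along it, and then exploits both the coheir direction (to pull parameters into $M$) and the strict-invariance direction (so that strict Morley sequences witness dividing, \cref{fact:strict MS witnesses}, giving non-forking of the resulting $\theta(x,c)$). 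Only after this does a second application of the $(p,q)$-theorem (\cref{prop:pq + implication => consistent}) give consistency of $\{\phi(x,b):b\in\psi_b(M)\}$. None of this machinery — strictly invariant types, two-sided Morley sequences, the careful choice of which coordinates of the compression set go into $M$ versus stay as variables — appears in your plan, and without it the ``extraction'' step is unsupported.

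Separately, the termination argument you gesture at (``a suitable VC-theoretic or compressibility rank of the residual should strictly decrease'') is not substantiated: the dual VC-dimension and the $(p,q)$-property are inherited by $\psi\wedge\neg\psi_0$ but do not strictly drop, so no such rank is evident. The paper's compactness argument gives finiteness without any rank, and the uniform version (\cref{thm:uniform definable pq}) is then obtained by a compactness/ultraproduct argument, not from an explicit bound on iterations.

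Once \cref{thm:the main theorem intro} is available, your conditions (a)--(c) and the covering step are fine — indeed they reduce to the paper's compactness argument — so the high-level shape of your conclusion is correct. But the proposal as written defers exactly the core difficulty and supplies no argument for it.
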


This is \cref{cor:definable p q explanation}. In the example above this is illustrated by taking $\psi_0 = (z_1=z_2)$ and $\psi_1 = (z_1 \neq z_2)$. 

The above theorem follows by compactness from the following theorem. 

\begin{theorem} \label{thm:the main theorem intro}
  Suppose that $T$ is NIP and that $M \vDash T$. Suppose that $\phi(x,y)$ is a formula over $M$ and $b\in \U^y$ is such that $\phi(x,b)$ does not fork over $M$. Then there is a formula $\psi(y)\in \tp(b/M)$ such that $\set{\phi(x,c)}{\U\vDash \psi(c)}$ is consistent. 
\end{theorem}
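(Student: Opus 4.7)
The plan is to follow the structure of the Boxall–Kestner proof of the distal case, replacing the use of distal honest definitions by the more flexible mechanism of locally compressible types from Bays–Kaplan–Simon. Since $T$ is NIP and $\phi(x,b)$ does not fork over $M$, $\phi(x,b)$ lies in some complete global type $p(x) \in S^x(\U)$ that is invariant over $M$. If one could produce a formula $\psi(y) \in \tp(b/M)$ with the property that $\U \vDash \psi(c)$ entails $\phi(x,c) \in p$, then a single realization of $p$ would simultaneously witness all of the $\phi(x,c)$ for $\vDash \psi(c)$, finishing the proof. Thus the problem reduces to a kind of local definability of $p$ at $\phi$ around the point $b$ by a formula over $M$.

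In the distal case such a $\psi$ is essentially immediate from a strong honest definition of $p$ at $\phi$, which provides an $M$-formula deciding ``$\phi(x,\cdot)\in p$'' uniformly and from which one reads off $\psi$. In general NIP no such global definition over $M$ need exist, so I would arrange instead that the extension $p$ can be chosen \emph{locally compressible at $\phi$ over $M$} in the sense of Bays–Kaplan–Simon: the set $\set{c}{\phi(x,c)\in p}$ is approximated by a controlled scheme of $M$-definable sets, and this approximation becomes exact on a small neighbourhood of any ``sufficiently generic'' point, in particular at $b$. Building such an $M$-invariant extension containing $\phi(x,b)$ should use NIP through Morley-type sequences witnessing non-dividing and coheir extensions, in the spirit of how compressibility is exploited in \cite{densecompression}. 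Once $p$ is so chosen, one reads off a formula $\psi(y) \in \tp(b/M)$ whose solution set is contained in $\set{c}{\phi(x,c)\in p}$, and consistency of $\set{\phi(x,c)}{\U\vDash\psi(c)}$ follows because all its members lie in $p$.

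The main obstacle I expect is the construction and use of the locally compressible $M$-invariant extension. Extending $\phi(x,b)$ to \emph{some} $M$-invariant $p$ is easy, but without further work there is no reason that the condition ``$\phi(x,c) \in p$'' should be controlled by any single $M$-formula in $\tp(b/M)$; this requires a delicate choice of $p$ and a genuine use of NIP beyond invariance alone. The technical heart should be a construction that glues a locally compressible building block near $b$ to an $M$-invariant globalization, while verifying that the local compressibility actually descends to an $M$-formula in $\tp(b/M)$ rather than merely to a formula over some larger base; this is where I expect the proof to diverge most substantially from the distal argument.
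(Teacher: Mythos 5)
Your reduction at the outset is correct but essentially tautological: if you could find $\psi(y)\in\tp(b/M)$ with $\psi(\U)\subseteq\set{c}{\phi(x,c)\in p}$ for some $M$-invariant global $p$, then a realization of $p$ witnesses consistency. But this is precisely the hard part, and the plan you offer to produce such a $\psi$ — choose $p$ ``locally compressible at $\phi$ over $M$'' and then ``read off'' an $M$-formula from the compression scheme — does not survive scrutiny. Compressibility of a type (in the sense of \cref{def:local compression}) says that finite restrictions are implied by uniformly bounded subrestrictions; it does not by itself yield an $M$-formula $\psi$ whose solution set is contained in $\set{c}{\phi(x,c)\in p}$, and I know of no construction that achieves this directly. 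In fact, the paper obtains the corresponding statement (the corollary after \cref{thm:the main theorem}) only as a \emph{consequence} of the main theorem, via the heir property; it is not an independent route to it.

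The missing ingredient in your plan is Matou\v sek's $(p,q)$-theorem, which is the combinatorial engine of both the Boxall--Kestner argument and its generalization here. In the paper, compressibility (\cref{lem:compressible eventually positive type}) is applied not to a global type $p$ but to a type over a carefully chosen strict Morley sequence $I$ built from a strictly invariant coheir (\cref{fact:non-coforking coheir}), and its role is to manufacture an auxiliary $M$-formula $\theta(x,z)$ (with $z$ a bounded tuple) together with a type $r(z)\in S(M)$ such that: (i) $\theta(x,c)$ does not fork over $M$ for $c\vDash r$, and (ii) instances of $\theta$ locally imply conjunctions of instances of $\phi$ (\cref{prop:lemma as in Gareth Lotte}). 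The passage from this ``implication data'' to consistency of $\set{\phi(x,b)}{b\in\psi(M)}$ is precisely \cref{prop:pq + implication => consistent}, which applies the $(p,q)$-theorem to the pair $(\theta,\zeta)$ for an appropriate $\zeta\in r$. Your sketch never invokes the $(p,q)$-theorem, never introduces an intermediate formula over $M$ of bounded arity, and never explains how ``gluing a locally compressible building block to an $M$-invariant globalization'' would close the loop. You have correctly identified which technical tools are in play (compressibility, coheirs, strict Morley sequences) and the overall shape (generalize Boxall--Kestner, replace distality), but the mechanism that actually makes the argument work is absent.
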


This is \cref{thm:the main theorem}. The idea of the proof is to generalize the argument of \cite{DistalDefinablePQ} by Boxall and Kestner who proved this theorem in the case when $T$ is distal (a subclass of NIP theories introduced by Simon in \cite{pierre-distal}). Their proof uses the fact that $T$ is distal only once, in \cite[Proposition 4.1]{DistalDefinablePQ}. We generalize this proposition to the NIP case by using the notion of locally compressible types from \cite{densecompression}, and being more careful with the choice of the strict Morley sequence.  

We also consider a uniform version of \cref{thm:full definable version intro}, i.e., varying the model $M$. 

\begin{theorem} \label{thm:uniform definable pq intro}
  Suppose that $T$ is NIP, and that $\phi'(x,y,z)$, $\psi'(y,z)$ are two formulas without parameters.  Then for any $q \leq p <\omega$ there is $n<\omega$ and formulas $\psi_0(y,w),\dots,\psi_{n-1}(y,w)$ such that the following hold.
  
  Suppose that $M\vDash T$ and $c \in M^z$. Let $\phi(x,y) = \phi'(x,y,c)$ and $\psi(y) = \psi'(y,c)$. If $(\phi,\psi)$ has the $(p,q)$-property and $\vc^*(\phi(x,y)) < q$ then for some $d_0,\dots,d_{n-1} \in M^w$, $\psi(y,c)$ is equivalent to the disjunction $\bigvee_{i<n}\psi_i(y,d_i)$ and for each $i<n$, the set $\set{\phi(x,b)}{b\in \psi_i(M,d_i)}$ is consistent. 
\end{theorem}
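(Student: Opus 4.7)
The plan is to deduce \cref{thm:uniform definable pq intro} from \cref{thm:full definable version intro} by a syntactic compactness argument, after packaging the conclusion into a first-order formula. For each finite tuple $\bar\psi = (\psi_0(y,w_0), \dots, \psi_{n-1}(y,w_{n-1}))$ of parameter-free $L$-formulas, I introduce
\[
\sigma^+_{\bar\psi}(z) \; := \; \exists \bar w \, \exists x_0 \cdots x_{n-1} \Big[\forall y\bigl(\psi'(y,z) \leftrightarrow \bigvee_{i<n} \psi_i(y,w_i)\bigr) \wedge \bigwedge_{i<n} \forall y\bigl(\psi_i(y,w_i) \to \phi'(x_i,y,z)\bigr)\Big].
\]
This first-order $L$-formula strengthens the conclusion of \cref{thm:full definable version intro} by asking for a single common realizer $x_i$ of each family $\set{\phi'(x,b,z)}{b \in \psi_i(\cdot,w_i)}$ rather than mere consistency. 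Since in a sufficiently saturated model the two conditions coincide, $\sigma^+_{\bar\psi}$ is a genuine first-order avatar of the conclusion. Let $\chi(z)$ be the first-order $L$-sentence encoding the hypotheses (the $(p,q)$-property and $\vc^*(\phi'(x,y,z)) < q$), and let $c$ be a fresh constant of sort $z$.

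The main step is to show that the $L \cup \{c\}$-theory $T \cup \{\chi(c)\} \cup \{\neg\sigma^+_{\bar\psi}(c) : \bar\psi\}$ is inconsistent. Given any model $(M^*, c^*)$, pass to a saturated elementary extension $\U^* \succeq M^*$; since $\chi$ is first-order, $\chi(c^*)$ holds in $\U^*$, so \cref{thm:full definable version intro} yields formulas $\chi_i(y) = \psi_{*,i}(y, e_i)$ over $\U^*$ with $\psi'(y,c^*) \equiv \bigvee_i \chi_i(y)$ and each $\set{\phi'(x,b,c^*)}{b \in \chi_i(\U^*)}$ consistent. Saturation of $\U^*$ produces common realizers $x_i \in \U^*$, giving $\U^* \vDash \sigma^+_{\bar\psi_*}(c^*)$ for $\bar\psi_* := (\psi_{*,0}, \dots, \psi_{*,n-1})$; but $\sigma^+_{\bar\psi_*}$ is first-order and $M^* \vDash \neg\sigma^+_{\bar\psi_*}(c^*)$, so by elementarity $\U^* \vDash \neg\sigma^+_{\bar\psi_*}(c^*)$, a contradiction. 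Standard compactness then furnishes finitely many tuples $\bar\psi_1, \dots, \bar\psi_K$ with $T \vdash \forall z\bigl(\chi(z) \to \bigvee_{k \leq K} \sigma^+_{\bar\psi_k}(z)\bigr)$.

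To collapse this finite disjunction to a single $\sigma^+_{\bar\psi^*}$, I would adjoin two extra switch parameters $s, t$ (of any nontrivial sort) to each component, replacing $\psi_{k,i}(y, w_{k,i})$ by $\psi_{k,i}(y, w_{k,i}) \wedge (s = t)$, which becomes identically false whenever $s \neq t$. Letting $\bar\psi^*$ be the concatenation of the modified formulas, for any $c$ with $\chi(c)$ one picks the $k^*$ supplied by the disjunction and switches off all other slots; this yields $T \vdash \forall z(\chi(z) \to \sigma^+_{\bar\psi^*}(z))$. Applied to an arbitrary $M \vDash T$ and $c \in M^z$ satisfying the hypotheses, this produces $\bar d, \bar x \in M$ witnessing both the definable covering of $\psi'(y, c)$ by $\bigvee_i \psi^*_i(y, d_i)$ and, via the common realizers $\bar x$, a strengthening of the consistency demanded by the conclusion.

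The main obstacle is the first-orderness issue at the outset: consistency of $\set{\phi'(x,b,z)}{b \in \psi_i(\cdot,w_i)}$ is a priori a countable conjunction of finite-consistency conditions, not a first-order statement in an arbitrary model, so a naive compactness argument does not apply. Saturation of $\U^*$ is precisely what collapses consistency to the existence of a single common realizer, rendering $\sigma^+_{\bar\psi}$ first-order; once this packaging is arranged, the compactness step and the switch-parameter combining trick are routine.
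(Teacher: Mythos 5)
Your plan hinges on the claim that ``in a sufficiently saturated model the two conditions coincide,'' i.e., that the consistency of $\set{\phi'(x,b,c^*)}{b\in\psi_i(\U^*,e_i)}$ yields a witness $x_i \in \U^*$ realizing $\forall y(\psi_i(y,e_i)\to\phi'(x_i,y,c^*))$. This is false, and the gap is fatal. Saturation of $\U^*$ produces realizers for types over parameter sets of size strictly \emph{less} than $|\U^*|$; the type in question is over $\psi_i(\U^*,e_i)$, which is in general a set of full cardinality. Its consistency gives a realizer only in a proper elementary extension of $\U^*$, and such a realizer will typically fail the universal condition (there will be $b\in\psi_i$ in the extension but outside $\U^*$ that it misses). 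So $\sigma^+_{\bar\psi}$ is \emph{not} an avatar of the conclusion of \cref{thm:full definable version intro}; it is a strict strengthening, and it is generally false.

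A concrete counterexample: take $T=\mathrm{DLO}$, $\phi'(x,y)=(x>y)$, $\psi'(y)=(y=y)$ (no $z$), and $(p,q)=(2,2)$. Then $\vc^*(x>y)=1<2$ and the $(2,2)$-property holds, so $\chi$ is satisfied; \cref{thm:uniform definable pq intro} holds trivially with $n=1$ and $\psi_0(y,w)=(y=y)$, since $\set{x>b}{b\in M}$ is consistent (realized in an extension). But for \emph{every} finite tuple $\bar\psi$ of parameter-free formulas and every $M\vDash T$, we have $M\vDash\neg\sigma^+_{\bar\psi}$: a witness $(\bar w,\bar x)$ would cover $M=\bigcup_i\psi_i(M,w_i)$ by sets each bounded above by the corresponding $x_i$, forcing $M$ to be bounded above, which is impossible. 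Thus $T\cup\{\chi(c)\}\cup\set{\neg\sigma^+_{\bar\psi}(c)}{\bar\psi}$ is \emph{consistent}, and the compactness step you aim for cannot succeed. This is exactly the obstruction the paper confronts: the paper does not try to make the conclusion of \cref{thm:the main theorem} first-order. Instead it transfers across the ultraproduct the genuinely first-order hypotheses of \cref{prop:pq + implication => consistent}, namely that $(\theta_i,\zeta_i)$ has the $(p_i,q_i)$-property and that for every finite $B\subseteq\psi_i(M,d_i)$ of bounded size there is a single $\theta_i$-instance implying $\phi(x,b)$ for all $b\in B$; these survive \L{}o\'s, and only \emph{after} landing in an index model $M_{\Psi'}$ is \cref{prop:pq + implication => consistent} invoked to recover consistency. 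Your switch-parameter trick for combining finitely many candidates is fine (the paper does the same), but it only enters after the compactness step, which is the part that fails here.
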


This is \cref{thm:uniform definable pq}. We deduce a uniform version related to the more refined notion of VC-density (used in Matou\v sek's original formulation of \cref{fact:pq theorem}, see \cref{fact:Matousek original}) in \cref{cor:Pablo's uniformity}, answering positively a version of \cite[Questions 3.7(2)]{Guerrero} where $T$ is assumed to be NIP, without restrictions on the VC-codensity. 


\subsection*{A short history of the problem}
\cref{thm:the main theorem intro} was first posed as a question by Chernikov and Simon in \cite[Problem 29]{CS-extDef2} even before the relation to the $(p,q)$-theorem was noticed. It was later conjectured by Simon \cite[Conjecture 5.1]{simon-invariant}. 

This was settled in the following cases:
\begin{itemize}
  \item In \cite{simon-dpmin}, Simon proved it for dp-minimal theories with small or medium directionality (a notion which measures the number of coheirs, see \cite{Sh946}).
  \item In \cite{SimonStarchenko}, Simon and Starchenko prove a stronger version of this conjecture for a large class of dp-minimal theories (e.g., those with definable Skolem functions), namely that every such formula $\phi(x,b)$ belongs to a definable type.
  \item In \cite{simon-invariant}, Simon generalized the first item to NIP theories (still constraining the directionality). In both papers \cite{simon-dpmin,simon-invariant} there are very interesting discussions of this problem and related results. 
  \item In \cite{DistalDefinablePQ}, Boxall and Kestner proved the conjecture for distal theories.
  \item In \cite{Rakotonarivo}, Rakotonarivo proved the conjecture for certain dense pairs of geometric distal structures.
  \item In \cite{Guerrero}, And\'ujar-Guerrero proved a special case of a stronger conjecture, \cite[Conjecture 2.15]{simon-invariant} which assumes only that $\phi$ is NIP, namely the case in which the VC-codensity of $\phi$ is less than $2$.
\end{itemize}

\subsection*{Structure of the paper}
In \cref{sec:Preliminaries} we go over all the basic notions involved in the proof, including NIP and forking. In \cref{sec:the proof} we prove \cref{thm:the main theorem intro}. In \cref{sec:uniformity} we prove \cref{thm:uniform definable pq intro}, and in \cref{subsec:VC-codensity uniformity} we review the notion of VC-density and deduce a variant of \cref{thm:uniform definable pq} corresponding to this notion. We conclude in \cref{sec:final thoughts} with some questions and final thoughts. 

\subsection*{Acknowledgements}
I would like to thank Martin Bays and Pierre Simon for many useful discussions and for their comments on previous versions of this paper. I would like to thank Pierre Simon in particular for reminding me of the direct proof of \cref{lem:compressible eventually positive type}. 

I would also like to thank Pablo And\'ujar-Guerrero for his comments on a previous version, for encouraging me to relate the results to VC-density, and for pointing out \cref{lem:pq for some q>vc*}.

\section{Preliminaries} \label{sec:Preliminaries}
Our notation is standard and is the same as in \cite[Section 2]{densecompression}. The only exception is that we use the notation $\U$ for the monster model. 

In the following subsections we will recall the basic definitions and facts we will use in the rest of the paper.

\subsection{VC-dimension and NIP}\label{subsec:VC-dim and NIP}

\begin{definition} [VC-dimension]
  Let $X$ be a set and $\calF\subseteq\SS(X)$. The pair $(X,\calF)$ is called a \defn{set system}.
  We say that $A\subseteq X$ is \defn{shattered} by $\calF$ if for every $S\subseteq A$ there is $F\in\calF$ such that $F\cap A=S$. A family $\calF$ is said to be a \defn{VC-class} on $X$ if there is some $n<\omega$ such that no subset of $X$ of size $n$ is shattered by $\calF$. In this case the \defn{VC-dimension of $\calF$}, denoted by $\vc(\calF)$, is the smallest integer $n$ such that no subset of $X$ of size $n+1$ is shattered by $\calF$.

  If no such $n$ exists, we write $\vc(\calF)=\infty$.
\end{definition}

\begin{fact} \cite[Lemma 6.3]{simon-NIP} \label{fact:dual}
  Suppose $\mathcal{F}$ is a VC-class on $X$. Let $\mathcal{F}^* = \set{\set{s \in \mathcal F}{x \in s}}{x \in X} \subseteq \SS(\mathcal{F})$ be the \defn{dual} of $\mathcal{F}$. Then $\mathcal{F}$ is a VC-class iff $\mathcal{F}^*$ is, and moreover $\vc^*(\mathcal{F}):=\vc(\mathcal{F}^*)<2^{\vc(\mathcal{F})+1}$.
\end{fact}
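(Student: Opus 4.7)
The plan is to run the standard Assouad--Shelah binary encoding argument. First I would set $d := \vc(\calF)$ and assume toward a contradiction that $\calF^*$ shatters some subfamily $\{F_1,\dots,F_N\} \subseteq \calF$ with $N = 2^{d+1}$. Unpacking the definition of $\calF^*$, for each $S \subseteq \{1,\dots,N\}$ this supplies a witness $x_S \in X$ such that $x_S \in F_i$ iff $i \in S$.

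The next step is to extract from these witnesses a shattered set of size $d+1$ inside $X$, which will contradict $\vc(\calF) = d$. Since $N = 2^{d+1}$, I would fix a bijection $i \mapsto v_i$ between $\{1,\dots,N\}$ and $\{0,1\}^{d+1}$, and for each $j \in \{1,\dots,d+1\}$ set $S_j := \{i \in \{1,\dots,N\} : v_i(j) = 1\}$. Given any target pattern $U \subseteq \{1,\dots,d+1\}$, the index $i_U$ whose vector $v_{i_U}$ is the indicator of $U$ would satisfy
\[
  x_{S_j} \in F_{i_U} \iff i_U \in S_j \iff v_{i_U}(j) = 1 \iff j \in U,
\]
so $F_{i_U}$ cuts out the pattern $U$ on $\{x_{S_1},\dots,x_{S_{d+1}}\}$. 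Hence these $d+1$ points are shattered by $\calF$, yielding the desired contradiction and so $\vc^*(\calF) < 2^{\vc(\calF)+1}$.

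For the biconditional, I would exploit the canonical map $\iota : X \to (\calF^*)^*$ sending $x$ to $\{e \in \calF^* : x \in e\}$; a short check shows that if $A \subseteq X$ is shattered by $\calF$ then $\iota$ is injective on $A$ (any two points of $A$ are separated by some $F \in \calF$) and the image $\iota(A)$ is shattered by $(\calF^*)^*$, so $\vc(\calF) \leq \vc((\calF^*)^*)$. Applying the already-proved inequality with $\calF^*$ in place of $\calF$ then gives $\vc(\calF) < 2^{\vc(\calF^*)+1}$, so finiteness of either $\vc(\calF)$ or $\vc(\calF^*)$ forces the other to be finite. The binary encoding trick is the only substantive idea; the only mild care needed is the double-dual bookkeeping, which is routine and not a real obstacle.
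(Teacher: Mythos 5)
The paper cites this fact from Simon's NIP book without proof, so there is no in-paper argument to compare against; I will simply assess correctness. Your binary-encoding argument for $\vc(\calF^*) < 2^{\vc(\calF)+1}$ is the standard one and is correct: the displayed chain of equivalences shows that $F_{i_U}$ realizes the pattern $U$ on $\{x_{S_1},\dots,x_{S_{d+1}}\}$, and these points are pairwise distinct (for $j\neq j'$, the index $i$ whose vector $v_i$ is the indicator of $\{j\}$ lies in $S_j \setminus S_{j'}$, so $x_{S_j} \in F_i$ while $x_{S_{j'}} \notin F_i$), so $\calF$ shatters a set of size $d+1$, the desired contradiction.

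For the biconditional, however, the map $\iota$ as written does not type-check: an element $e \in \calF^*$ is a subset of $\calF$, so its members are sets belonging to $\calF$, and the condition ``$x \in e$'' for $x \in X$ is not meaningful. What you want is $\iota : X \to \calF^*$ (landing in $\calF^*$, not in $(\calF^*)^*$) sending $x$ to $e_x := \{s \in \calF : x \in s\}$. With this fix the argument does go through: $\iota$ is injective on a shattered set $A$ because any two points of $A$ are separated by some $F \in \calF$; and $\iota(A) \subseteq \calF^*$ is shattered by $(\calF^*)^*$, since for $U \subseteq A$ one picks $F_U \in \calF$ with $F_U \cap A = U$ and then $g_{F_U} := \{e \in \calF^* : F_U \in e\}$ lies in $(\calF^*)^*$ with $e_a \in g_{F_U} \iff a \in F_U \iff a \in U$ for $a \in A$. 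This gives $\vc(\calF) \leq \vc((\calF^*)^*)$, and the rest of your argument (applying the first inequality to $\calF^*$) then closes the biconditional. The encoding trick was exactly right; the dual bookkeeping, which you called routine, is precisely where the slip crept in.
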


\begin{definition} \label{def:NIP formula and theory}
  Suppose $T$ is an $\mathcal{L}$-theory and $\phi(x,y)$ is a formula.
  Say that $\phi(x,y)$ is \defn{NIP} if for some/every $M\vDash T$, the family $\set{\phi(M,a)}{a\in M^{y}}$ is a VC-class. Otherwise, $\phi$ is \defn{IP} (IP stands for ``Independence Property'' while NIP stands for ``Not IP'').

  Let $\vc(\phi)$ be the VC-dimension of $\set{\phi(M,a)}{a\in M^{y}}$, where $M$ is any (some) model of $T$.  Note that this definition depends on the partition of variables.

  Let $\phiopp$ be the partitioned formula $\phi(y,x)$ (it is the same formula with the partition reversed). Let $\vc^*(\phi) = \vc(\phiopp)$ be the \defn{dual} VC-dimension of $\phi$ (this definition agrees with the one in \cref{fact:dual} below). 

  The (complete first-order) theory $T$ is \defn{NIP} if all formulas are NIP. A structure $M$ is NIP if $\Th(M)$ is NIP.
\end{definition}

\begin{remark} \label{rem:alternation rank}
  NIP formulas are especially well-behaved in the presence of indiscernible sequences. Indeed, for $\phi(x,y)$ NIP and an indiscernible sequence $\sequence{a_i}{i<\omega}$ of $y$-tuples there is no $b \in \U^x$ such that $\U \vDash \neg (\phi(b,a_i) \leftrightarrow \phi(b,a_{i+1}))$ for all $i<2\vc^*(\phi)+1$ (see \cite[Proposition 3]{Adler} or \cite[Lemma 2.7]{simon-NIP}). The maximal such $l$ is called the \defn{alternation rank of $\phi$}.  
\end{remark}

From this one deduces the following facts.

\begin{fact} \label{fac:alternation} 
  Let $\phi(x,y)$ be an NIP formula. Suppose that $I = \sequence{a_i}{i<\omega}$ is an indiscernible sequence of $y$-tuples.
  \begin{enumerate}
    \item \label{enu:eventual} \cite[Proposition 2.8]{simon-NIP} For any $b \in \U^x$ there is some $n<\omega$ and $\epsilon<2$ such that $\phi(b,a_i)^\epsilon$ holds for all $i>n$. 
    \item \label{enu:low} (``Lowness'') \cite[Lemma 2.2]{simon-invariant} There is some $n<\omega$ such that if $\set{\phi(x,a_i)}{i<n}$ is consistent, then so is $\set{\phi(x,a_i)}{i<\omega}$. In fact, $n$ can be chosen to be $\vc^*(\phi)+1$. 
  \end{enumerate}
\end{fact}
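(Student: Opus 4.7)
The plan is to prove (1) almost directly from \cref{rem:alternation rank}, and to prove (2) by a contradiction argument that extracts witnesses with exact traces from a minimal inconsistency and uses them to shatter a $(\vc^*(\phi)+1)$-subset.

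For (1), I would argue by contrapositive. Suppose $\phi(b, a_i)$ is not eventually constant; then both truth values occur for arbitrarily large $i$, so one can extract indices $j_0 < j_1 < \cdots$ along which $\phi(b, a_{j_\ell})$ alternates in value at every step. By indiscernibility of $I$, the increasing tuple $(a_{j_0}, \ldots, a_{j_{2\vc^*(\phi)+1}})$ has the same type as $(a_0, \ldots, a_{2\vc^*(\phi)+1})$, so some automorphism of $\U$ sends $b$ to an element $b'$ with $\phi(b', a_\ell) \neq \phi(b', a_{\ell+1})$ for every $\ell < 2\vc^*(\phi)+1$, contradicting \cref{rem:alternation rank}.

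For (2), set $k = \vc^*(\phi)$ and suppose for contradiction that $\{\phi(x, a_i) : i < k+1\}$ is consistent while $\{\phi(x, a_i) : i < \omega\}$ is not. By compactness there is a minimal $m$ with $\{\phi(x, a_i) : i < m\}$ inconsistent, so the hypothesis forces $m \geq k+2$. For any $J \subseteq \omega$ with $|J| = m-1$, indiscernibility (the increasing enumeration of $J$ has the same type as $(a_0, \ldots, a_{m-2})$) together with minimality of $m$ gives that $\{\phi(x, a_i) : i \in J\}$ is consistent; fix a witness $b_J$. The key observation is that $b_J$'s trace on $\omega$ is \emph{exactly} $J$: if $\phi(b_J, a_i)$ also held for some $i \in \omega \setminus J$, then $b_J$ would realize the $m$ instances indexed by $J \cup \{i\}$, which by indiscernibility has the same type as $\{\phi(x, a_\ell) : \ell < m\}$ and is therefore inconsistent. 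To shatter $\{a_0, \ldots, a_k\}$, for each $S \subseteq \{0, \ldots, k\}$ let $J_S = S \cup \{k+1, k+2, \ldots, k + (m - 1 - |S|)\}$, a size-$(m-1)$ subset of $\omega$ meeting $\{0, \ldots, k\}$ precisely in $S$. Then the trace of $b_{J_S}$ on $\{a_0, \ldots, a_k\}$ is exactly $S$, so as $S$ ranges over all $2^{k+1}$ subsets this shatters a $(k+1)$-element set, contradicting $\vc^*(\phi) = k$.

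The main subtlety in (2) is noticing that the witnesses $b_J$ of the minimally-consistent subsets have trace \emph{equal} to $J$ and not merely containing it; once this is in hand the shattering is immediate from the padded choice of $J_S$. Part (1), in contrast, is essentially a reformulation of \cref{rem:alternation rank} after using indiscernibility to reduce an arbitrary infinite alternation to a consecutive one.
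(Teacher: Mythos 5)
Both parts are correct. The paper cites these facts to external references without proof, so there is no in-paper argument to compare against; the sentence preceding the statement only indicates that they follow from \cref{rem:alternation rank}. Your part (1) is indeed such a derivation: extract $2\vc^*(\phi)+1$ consecutive alternations from an infinitely alternating trace, then move $b$ by an automorphism of $\U$ matching the extracted subtuple to an initial segment, contradicting the alternation bound. For part (2) you argue directly from the definition of $\vc^*(\phi)$ rather than via the alternation bound: minimal inconsistency at some $m\geq k+2$ gives, for each $J$ with $|J|=m-1$, a realization $b_J$ whose $\phi$-trace on the sequence is \emph{exactly} $J$ (any extra positive instance would produce an inconsistent $m$-element subfamily, by indiscernibility), and the padded sets $J_S$ then shatter $\{a_0,\dots,a_k\}$ for the dual family, contradicting $\vc^*(\phi)=k$. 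This is a clean, self-contained route that yields the sharp constant $n=\vc^*(\phi)+1$. One implicit assumption worth flagging: the shattering step treats $a_0,\dots,a_k$ as $k+1$ distinct tuples, i.e.\ assumes $I$ is non-constant; but if $I$ is constant then $\set{\phi(x,a_i)}{i<\omega}$ reduces to a single formula and the lowness statement is trivial, so this costs nothing.
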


\subsection{Locally compressible types}

\newcommand{\down}{\downarrow}
\begin{definition} \label{def:local compression}
  Fix a formula $\phi(x,y)$, $k<\omega$ and a parameter set $A \subseteq  \U^y$.
  \begin{itemize}
    \item $p \in S_\phi$ is \defn{$k$-compressible} if for any finite $A_0 \subseteq  A$ there is $A_1 \subseteq  A$ with $|A_1| \leq  k$ such that $p|_{A_1}(x) \vdash p|_{A_0}(x)$.

    \item $S_{\phi \down k}(A) \subseteq  S_\phi(A)$ is the space of $k$-compressible $\phi$-types.
  \end{itemize}
\end{definition}

\begin{definition}\label{def:rounded average}
  Suppose $\phi(x,y)$ is a formula, $B \subseteq \U^y$ and $p_0(x),\ldots ,p_{n-1}(x) \in S_\phi(B)$.

  The \defn{rounded average} of  $p_0(x),\ldots ,p_{n-1}(x) \in S_\phi(B)$ is the following (possibly inconsistent) collection of formulas 
  \[\ravg{\varsequence{p_i}{i<n}} =\set{\phi(x,b)^\eps}{b \in B, \eps<2, \Maj_{i<n} (\phi(x,b)^\eps \in p_i(x))},\]

  where $\Maj$ be the majority rule Boolean operator: for truth values $P_0,\dots,P_{n-1}$, let
  \[\Maj_{i < n} P_i =
  \bigvee_{\substack{I_0 \subseteq n \\ |I_0| > n/2}}  \bigwedge_{i \in I_0} P_i.\]
\end{definition}

\begin{fact} \label{fact:rounded average of compressible} \cite[Theorem 5.17]{densecompression}
  Let $T$ be any theory. Let $\phi(x,y)$ be an NIP formula.
  Then there exist $n$ and $k$ depending only on $\vc(\phi)$ such that for $A \subseteq  \U^y$,
  any $p \in S_{\phi}(A)$ is the rounded average of $n$ types in 
  $S_{\phi\down k}(A)$.
\end{fact}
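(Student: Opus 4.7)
The plan is to reduce to a combinatorial statement about VC-classes and then argue by a weighted majority-vote scheme. Identify each $q \in S_\phi(A)$ with its indicator function $f_q : A \to \{0,1\}$ defined by $f_q(a)=1$ iff $\phi(x,a) \in q$. The family of such functions arising from realized types forms a set system whose dual VC-dimension is bounded by $\vc^*(\phi)$, which is controlled by $\vc(\phi)$ via \cref{fact:dual}. By compactness, if the statement holds for every finite $A$ with uniform bounds $n,k$ depending only on $\vc(\phi)$, an ultralimit in $S_{\phi\down k}(A)^n$ lifts it to arbitrary $A$; so from now on I would assume $A$ is finite.

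For the finite case, I would first produce a rich supply of $k$-compressible candidates. By the Sauer--Shelah lemma, the number of distinct $\phi$-types over any finite $B \subseteq A$ is polynomial in $|B|$ with exponent at most $\vc(\phi)$; a standard sample-compression extraction for VC-classes then yields, for every realized type $q \in S_\phi(A)$, a ``core'' of parameters of uniformly bounded size $k=k(\vc(\phi))$ whose $\phi$-pattern inside $q$ recovers $q|_B$ for every finite $B$. Hence every realized type is $k$-compressible, and in particular $S_{\phi\down k}(A)$ is large.

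The main step is then to realize the given $p$ as the rounded average of $n = n(\vc(\phi))$ such compressible types. The approach I would take is a boosting-style weighted majority-vote argument: maintain a probability weighting $\mu$ on $A$, at each round select a $k$-compressible type $p_i$ whose indicator $f_{p_i}$ agrees with $f_p$ on $\mu$-mass strictly greater than $1/2$, and then double (and renormalize) the weight of the parameters where $f_{p_i} \neq f_p$. The classical potential-function analysis, together with the $\epsilon$-approximation theorem for the dual VC-class, forces termination after $n = O(\vc(\phi))$ rounds, at which point the pointwise majority of $f_{p_0},\dots,f_{p_{n-1}}$ coincides with $f_p$ — which is precisely the rounded-average representation. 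The hard part will be verifying the existence of the ``weak learner'' at each round: one needs that the set of compressible types is rich enough that some such type always agrees with $p$ on a $\mu$-majority of $A$, uniformly in $\mu$. Here I expect to use the NIP alternation-rank bound (\cref{rem:alternation rank}, \cref{fac:alternation}) together with Sauer--Shelah applied dually to coherently extract a single globally $k$-compressible witness rather than a merely pointwise-consistent family of local choices.
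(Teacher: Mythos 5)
The paper does not prove this statement; it is imported verbatim from \cite[Theorem 5.17]{densecompression}, so there is no in-paper argument to compare against. Evaluating your proposal on its own terms, the central claim in your second paragraph --- that every realized $\phi$-type over a finite $A$ is $k$-compressible for $k$ depending only on $\vc(\phi)$ --- is false, and this gap is fatal to the rest of the plan. Consider $T$ the theory of an equivalence relation $E$ with infinitely many infinite classes, $\phi(x,y) = xEy$ (so $\vc(\phi)=1$), $A = \{a_1,\dots,a_m\}$ pairwise inequivalent, and $p = \{\neg\phi(x,a_i) : i < m\}$, which is realized by any $c$ in a fresh class. Then for any proper $A_1 \subsetneq A$, $p|_{A_1}$ does not imply $\neg\phi(x,a_j)$ for $a_j \notin A_1$, so $p$ is not $k$-compressible for any $k < m$. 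The slip is a conflation of learning-theoretic sample-compression schemes (which return a \emph{consistent hypothesis} from a small subsample) with the model-theoretic notion of \cref{def:local compression}, which demands the much stronger logical implication $p|_{A_1} \vdash p|_{A_0}$. Indeed, if every type were uniformly compressible the statement would be trivial with $n=1$, and the entire point of \cref{fact:rounded average of compressible} is to handle the non-compressible types (the example just given is handled with $n=3$ by averaging the three $1$-compressible types $q_i = \{xEa_i\} \cup \{\neg xEa_j : j\ne i\}$).

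Even setting aside that false lemma, the boosting step does not close. The potential-function analysis of multiplicative-weights boosting with a weak learner of fixed advantage $\gamma$ drives the majority-vote error below $1/|A|$ only after $\Omega(\gamma^{-2}\log|A|)$ rounds, so $n$ would depend on $|A|$ and the compactness reduction to finite $A$ would not yield a uniform bound. Your invocation of the $\epsilon$-approximation theorem is the right instinct for removing this dependence (one would want to show that the family of $k$-compressible types restricted to $A$ is a VC-class and pass to a small $\epsilon$-approximation of a suitable measure on it), but you neither establish the needed bound on that dual VC-dimension nor, crucially, prove the existence of the weak learner: you explicitly flag the claim that some $k$-compressible type agrees with $p$ on a $\mu$-majority of $A$, uniformly in $\mu$, as ``the hard part,'' and that is precisely where essentially all of the content of the theorem lives. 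As written, the proposal reduces the theorem to an unproven statement of comparable difficulty while resting on an intermediate claim that is outright false.
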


\subsection{Forking and NIP}
We recall the definition of forking in any theory $T$ and its behavior under NIP. 
\begin{definition} [Forking]
  Fix a set $A$.

  A formula $\phi(x,b)$ \defn{divides} over $A$ if there is some $k<\omega$ such that $\phi(x,b)$ $k$-divides: for some sequence $\sequence{b_i}{i<\omega}$ such that $b_i \equiv_A b$, $\set{\phi(x,b_i)}{i<\omega}$ is $k$-inconsistent. 

  A formula $\psi(x)$ \defn{forks} over $A$ if it implies a finite disjunction of dividing formulas over $A$.

  A partial type $\pi(x)$ \defn{divides/forks} over $A$ if it implies a dividing/forking formula over $A$.
\end{definition}

For more on the basic properties of forking and dividing, see e.g., \cite[Section 2]{forkingNTP2}. 

\subsubsection{Forking and the $(p,q)$-theorem} \label{subsec:forking and pq}

We now turn to the $(p,q)$-theorem (\cref{fact:pq theorem}) and its connection to non-forking. Recall \cref{def:pair of formulas with the pq property} from the introduction.

Applying \cref{fact:pq theorem}, we get directly the following.
\begin{lemma} \label{lem:pq for pairs of formulas}
  Suppose that $M \vDash T$ and that $\phi(x,y)$, $\psi(y)$ are formulas over $M$.  Suppose that $(\phi, \psi)$ has the $(p,q)$-property for $\vc^*(\phi) < q \leq p$ (in particular, $\phi$ is NIP). Let $n = \Npq(p,q)$. Then for every finite set $C \subseteq \psi(M)$, there are $a_0,\dots,a_{n-1} \in M^x$ such that for any $c\in C$, $\phi(a_i,c)$ holds for some $i<n$. 
\end{lemma}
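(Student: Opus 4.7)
The plan is to reduce directly to Matou\v sek's theorem (\cref{fact:pq theorem}) by passing from the infinite ambient set $M^x$ to a finite sample. Let $\calF := \set{\phi(M,c)}{c \in C}$, a finite family of nonempty subsets of $M^x$. As a subfamily of $\set{\phi(M,b)}{b \in \psi(M)}$, it inherits the $(p,q)$-property, and its dual VC-dimension is at most $\vc^*(\phi) < q$, since any $d$-element subset of $\calF$ shattered by $\calF^*$ corresponds to $d$ parameters shattered by the $\phiopp$-family.

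Since \cref{fact:pq theorem} requires a finite ground set, the next step is to fix a finite $X \subseteq M^x$ meeting every nonempty intersection $\bigcap_{c \in C_0}\phi(M,c)$ for $C_0 \subseteq C$; such $X$ exists because $C$ has only finitely many subsets. Then $\calF' := \set{F \cap X}{F \in \calF}$ is a finite set system on $X$ whose members are nonempty (by the $|C_0|=1$ case), which still has the $(p,q)$-property (any $p$ distinct sets in $\calF'$ lift to $p$ distinct sets in $\calF$, and a $q$-fold nonempty intersection in $\calF$ meets $X$ by construction), and whose dual VC-dimension remains bounded by $\vc^*(\phi)$, so still less than $q$.

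Applying \cref{fact:pq theorem} to $(X,\calF')$ yields a hitting set $X_0 \subseteq X$ of size at most $n = \Npq(p,q)$ meeting every $F \cap X$, hence every $F \in \calF$. Enumerating $X_0$ as $a_0,\dots,a_{k-1}$ and padding arbitrarily with elements of $M^x$ to length $n$ gives the required tuple: for each $c \in C$ some $a_i$ lies in $\phi(M,c)$, i.e.\ $\phi(a_i,c)$ holds in $M$. No step is a real obstacle here; the only care needed is in verifying that restricting to $X$ preserves both the $(p,q)$-property and the bound on the dual VC-dimension, and both checks are immediate from the choice of $X$.
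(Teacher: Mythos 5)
Your argument is correct, and it is the same approach the paper has in mind: the paper treats \cref{lem:pq for pairs of formulas} as an immediate consequence of \cref{fact:pq theorem} and offers no further proof, while you simply fill in the routine details (passing to a finite ground set meeting all nonempty intersections, checking that nonemptiness, the $(p,q)$-property, and the dual VC-dimension bound are preserved, and padding the hitting set to size $n$). All of those checks are valid, so nothing is missing.
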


The following was proved in \cite[Lemma 2.4]{simon-invariant} over models, but the proof there omitted the (necessary) condition that the $\phi(M,b)\neq \emptyset$ for all $b\in \psi(M)$, so we repeat it briefly here. 

\begin{lemma} \label{lem:equivalence of non dividing formulas and pq}
  Suppose that $M \vDash T$ and that $\phi(x,y)$, $\psi(y)$ are formulas over some set $A \subseteq M$ with $\phi$ NIP. Then the following are equivalent:
  \begin{enumerate}
    \item For all $b\in \psi(\U)$, $\phi(x,b)$ does not divide over $A$.     
    \item For all $0<q$ there is some $p \geq q$ such that $(\phi, \psi)$ has the $(p,q)$-property.
    \item $(\phi, \psi)$ has the $(p,q)$-property for some $\vc^*(\phi)<q \leq p$. 
  \end{enumerate} 
\end{lemma}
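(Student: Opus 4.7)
My plan is to prove the three implications $(2)\Rightarrow(3)\Rightarrow(1)\Rightarrow(2)$. The first is immediate: take $q := \vc^*(\phi)+1$ and apply $(2)$ to obtain some $p\geq q$.

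For $(3)\Rightarrow(1)$, I would first note that the $(p,q)$-property for $(\phi,\psi)$ is expressible by a first-order sentence over $A$ (a conjunction over the $\binom{p}{q}$ choices of $q$-subsets, together with $\forall y\,(\psi(y)\to\exists x\,\phi(x,y))$ for the nonemptiness clause), so it transfers between $M$ and $\U$. Suppose toward contradiction that $\phi(x,b)$ divides over $A$ for some $b\in\psi(\U)$, witnessed by an $A$-indiscernible sequence $\langle b_i:i<\omega\rangle$ with $b_i\equiv_A b$ and $\{\phi(x,b_i):i<\omega\}$ inconsistent. Each $b_i$ lies in $\psi(\U)$, and each $\phi(x,b_i)$ is consistent by the nonemptiness clause in $\U$. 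By lowness (\cref{fac:alternation}), $\{\phi(x,b_i):i<q\}$ is already inconsistent since $q>\vc^*(\phi)$, and by indiscernibility every $q$-subset of $\{b_i:i<\omega\}$ is $\phi$-inconsistent. The $b_i$'s must give pairwise distinct $\phi$-sets---otherwise indiscernibility forces them all to define the same $\phi$-set, contradicting consistency of $\phi(x,b_0)$ together with $q$-inconsistency. Then $b_0,\dots,b_{p-1}$ contradicts the $(p,q)$-property in $\U$.

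For $(1)\Rightarrow(2)$, fix $q$. Under $(1)$, inconsistent formulas divide (via constant indiscernible sequences), so each $\phi(x,b)$ with $b\in\psi(\U)$ is consistent, whence $\phi(M,b)\neq\emptyset$ for $b\in\psi(M)$ by elementarity. Assume for contradiction that no $p\geq q$ works; then for each such $p$ there exist $b^p_0,\dots,b^p_{p-1}\in\psi(M)$ giving pairwise distinct $\phi$-sets with every $q$-subset $\phi$-inconsistent. Compactness produces an infinite sequence $\langle c_i:i<\omega\rangle$ in $\psi(\U)$ with the same $q$-inconsistency property, and a standard Ramsey extraction yields an $A$-indiscernible subsequence preserving it. Then $\{\phi(x,c_i):i<\omega\}$ is $q$-inconsistent along the $A$-indiscernible $\langle c_i\rangle$, showing $\phi(x,c_0)$ divides over $A$---contradiction.

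The main technical care lies in $(3)\Rightarrow(1)$: one must ensure the indiscernible sequence yields $p$ genuinely distinct $\phi$-sets so as to actually contradict the $(p,q)$-property, which is exactly the point missed in the original proof noted in the paragraph preceding the lemma. This requires invoking the nonemptiness clause (transferred to $\U$) together with a short case analysis on indiscernibility.
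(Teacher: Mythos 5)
Your proof is correct and follows essentially the same cycle of implications as the paper's, with the same key ingredients: elementarity of the $(p,q)$-property, lowness/alternation rank to reduce to $q$-inconsistency along an indiscernible sequence, the observation that indiscernibility forces the $\phi$-sets to be pairwise distinct (using nonemptiness), and compactness plus Ramsey for $(1)\Rightarrow(2)$. The only cosmetic difference is that you cite lowness rather than the alternation rank remark for the $q$-inconsistency step, but the contrapositive of lowness gives exactly what is needed.
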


\begin{proof}
  Note that the statement ``$(\phi, \psi)$ has the $(p,q)$-property'' is elementary, so it is true in $M$ iff it is true in $\U$. 
  
  For (1) implies (2), note that (1) implies immediately that for all $b\in \psi(\U)$, $\phi(\U,b)$ is nonempty. Fix $q>0$. If for every $p\geq q$, $(\phi, \psi)$ does not have the $(p,q)$-property then by compactness and Ramsey there is an $A$-indiscernible sequence $\sequence{b_i}{i<\omega}$ of elements from $\psi(\U)$ witnessing that $\phi(x,b_0)$ divides over $A$. 

  (2) implies (3) is trivial. 

  Assume (3). Suppose that for some $b \in \psi(\U)$, $\phi(x,b)$ divides over $A$. Then there is an $A$-indiscernible sequence $\sequence{b_i}{i<\omega}$ starting with $b_0 =b$ such that $\set{\phi(x,b_i)}{i<\omega}$ is inconsistent. By \cref{rem:alternation rank}, it is already $\vc^*(\phi)+1 \leq q$-inconsistent. Note that for each $i<j<\omega$, $\phi(\U,b_i) \neq \phi(\U,b_j)$ since otherwise by indiscernibility, $\sequence{\phi(\U,b_i)}{i<\omega}$ is constant and thus all these sets will be empty. Together, we get that $\sequence{\phi(\U,b_i)}{i<p}$ contradicts the $(p,q)$-property. 
\end{proof}

Finally, recall the following corollary of ``lowness'' of NIP formulas (\cref{fac:alternation}(\ref{enu:low})):

\begin{fact}\label{fac:dividing type definable} \cite[Corollary 2.3]{simon-invariant}
  Suppose that $\phi(x,y)$ is a NIP formula (without parameters), and that $\phi(x,b)$ does not divide over a model $M$. Then there is a formula $\psi(y) \in \tp(b/M)$ such that for all $b' \in \psi(\U)$, $\phi(x,b')$ does not divide over $M$. 

  Together with \cref{lem:equivalence of non dividing formulas and pq}, we get that $(\phi,\psi)$ has the $(p,q)$-property for $q:=\vc^*(\phi)+1$ and some $p\geq q$. 
\end{fact}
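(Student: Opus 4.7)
The plan is to turn non-dividing into an open condition on $S_y(M)$ by invoking lowness (\cref{fac:alternation}(\ref{enu:low})) twice, once to get a finite characterization of non-dividing and once to propagate it from $b$ to nearby realizations. Set $n := \vc^*(\phi) + 1$. By Ramsey, $\phi(x,b)$ divides over $M$ iff there is an $M$-indiscernible sequence $(b_i)_{i<\omega}$ with $b_0 = b$ and $\{\phi(x,b_i) : i<\omega\}$ inconsistent; by lowness, this is further equivalent to such a sequence existing with the stronger $\bigwedge_{i<n}\phi(x,b_i)$ already inconsistent.

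With this reformulation in hand, I would run a standard compactness argument. Consider the partial type $\Gamma(z_0, z_1, \dots)$ over $M$ in countably many $y$-variables expressing:
\begin{enumerate}
  \item $(z_i)_{i<\omega}$ is $M$-indiscernible (an $M$-definable schema on tuples of the $z_i$'s);
  \item $z_0$ realizes $\tp(b/M)$;
  \item $\neg \exists x \, \bigwedge_{i<n} \phi(x, z_i)$.
\end{enumerate}
By the assumption that $\phi(x,b)$ does not divide over $M$, combined with Step 1, $\Gamma$ is inconsistent. By compactness a finite fragment $\Gamma_0$ is inconsistent; let $\psi(y) \in \tp(b/M)$ be the conjunction of the finitely many formulas from $\tp(b/M)$ that appear in $\Gamma_0$. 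Now for any $b' \models \psi$ and any $M$-indiscernible sequence $(b'_i)_{i<\omega}$ with $b'_0 = b'$, the indiscernibility schema and $\psi(b'_0)$ are all satisfied, so the inconsistency of $\Gamma_0$ forces $\bigwedge_{i<n} \phi(x, b'_i)$ to be consistent. Applying lowness in the other direction yields that $\{\phi(x, b'_i) : i<\omega\}$ is consistent, so $\phi(x,b')$ does not divide over $M$. This proves the first assertion.

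The ``together with'' clause is immediate: the $\psi$ just produced satisfies condition (1) of \cref{lem:equivalence of non dividing formulas and pq}, hence also condition (2), yielding the $(p, \vc^*(\phi)+1)$-property for some $p \geq \vc^*(\phi)+1$. The only subtle point in the whole argument is checking that ``$(z_i)$ is $M$-indiscernible'' is a genuine partial type over $M$ so that compactness applies cleanly, but this is routine once one writes the indiscernibility schema as the equality-of-$\L(M)$-types condition on ordered tuples of the $z_i$'s. No nontrivial ingredient beyond lowness and Ramsey is needed.
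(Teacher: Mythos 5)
Your proof is correct. The paper does not prove this statement but simply cites it from Simon's paper as a ``corollary of lowness,'' and your compactness-plus-lowness argument (encoding dividing via an indiscernibility schema together with $\neg\exists x\bigwedge_{i<n}\phi(x,z_i)$, then extracting a finite fragment) is exactly that standard argument.
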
 

\subsubsection{Forking in NIP and NTP$_2$} \label{subsec:forking in NIP and NTP2}

The basic property of non-forking in NIP is the equivalence with Lascar-invariance, which over models translates to invariance. 
\begin{fact} \label{fac:forking in NIP} (\cite[Proof of Proposition 5.21]{simon-NIP})
  If $\phi(x,y)$ is NIP and $p(x) \in S_\phi(\U)$ is a global $\phi$-type, non-forking over a model $M$, then $p$ is $M$-invariant. 
  
  It follows that if $\phi(x,c)$ does not fork over $M$, and $B \subseteq \U^y$ is a set of realizations of $\tp(c/M)$ then $\set{\phi(x,b)}{b\in B}$ does not fork over $M$ (and is in particular consistent). 

\end{fact}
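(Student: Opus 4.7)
The plan is to follow the standard argument from Simon's NIP book (the proof of Proposition 5.21), proceeding in two steps corresponding to the two assertions.

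For the first claim---that a global $\phi$-type $p$ non-forking over $M$ is $M$-invariant---I would argue by contradiction. Suppose $p$ is not $M$-invariant, so there exist $b \equiv_M b'$ with $\phi(x,b) \in p$ and $\neg\phi(x,b') \in p$. Then $\theta(x) := \phi(x,b) \wedge \neg\phi(x,b')$ lies in $p$, hence does not fork (and in particular does not divide) over $M$. First I would apply Ramsey to build an $M$-indiscernible sequence of pairs $\sequence{(b_i, b_i')}{i<\omega}$ with $(b_0, b_0') = (b, b')$. By non-dividing of $\theta$, there is $a \in \U$ realizing $\phi(x, b_i) \wedge \neg\phi(x, b_i')$ for all $i$. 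The goal is then to extract from $\{b_i, b_i' : i < \omega\}$ an $M$-indiscernible singleton sequence $(c_k)_{k<\omega}$ on which $\phi(a, \cdot)$ alternates infinitely often, contradicting the alternation-rank bound for the NIP formula $\phi$ (\cref{rem:alternation rank}).

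The second claim would then follow formally. Given $\phi(x,c)$ non-forking over $M$, I would invoke the extension property to find a global $\phi$-type $p \in S_\phi(\U)$ extending $\phi(x,c)$ and non-forking over $M$. The first part gives that $p$ is $M$-invariant, so for each $b \in B$, the relation $b \equiv_M c$ together with $\phi(x,c) \in p$ forces $\phi(x,b) \in p$. Hence $\set{\phi(x,b)}{b \in B} \subseteq p$, which is consistent, and non-forking as it lies inside a non-forking type.

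The main obstacle is the extraction step in the first claim. The naive merged sequence $b_0, b_0', b_1, b_1', \ldots$ is generally not $M$-indiscernible, because the within-block 2-types $\tp(b_i, b_j/M)$, $\tp(b_i', b_j'/M)$ and the cross 2-types $\tp(b_i, b_j'/M)$ need not coincide. To overcome this I would use a bipartite Ramsey refinement: color pairs $(i,j) \in \omega \times \omega$ by the type $\tp(b_i, b_j'/M)$ (and symmetrically $\tp(b_j', b_i/M)$) and pass to infinite subsets $A, B \subseteq \omega$ on which these cross-types are constant. After this refinement one can arrange an $M$-indiscernible sequence that mixes elements of the two blocks while keeping the pattern of $\phi(a, \cdot)$-values alternating, producing the desired contradiction with NIP. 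All other steps (building the indiscernible sequence of pairs, invoking non-dividing, and the extension/invariance deduction in the second part) are routine once this extraction is in place.
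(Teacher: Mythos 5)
The second claim is argued correctly: extend $\{\phi(x,c)\}$ to a global complete type non-forking over $M$, restrict it to a global $\phi$-type $p$, apply the first claim to get $M$-invariance of $p$, and conclude $\set{\phi(x,b)}{b\in B}\subseteq p$. That part is fine.

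The gap is exactly at the extraction step you flagged, and I do not think the bipartite Ramsey refinement closes it. Indiscernibility of the sequence of pairs $\sequence{(b_i,b_i')}{i<\omega}$ controls types of tuples of \emph{pairs}, not of arbitrary tuples of singletons drawn from $\set{b_i,b_i'}{i<\omega}$. Even after passing to subsets $A,B\subseteq\omega$ on which the cross $2$-types are constant, there is no reason the resulting constant cross $2$-type should coincide with the (already constant) within-block $2$-types $\tp(b_i,b_j/M)$ and $\tp(b_i',b_j'/M)$, and one would further need agreement of all $n$-types, which a $2$-coloring argument does not touch. So ``one can arrange an $M$-indiscernible sequence that mixes elements of the two blocks'' is precisely the assertion in question; it is not a consequence of the Ramsey step.

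The proof the paper cites circumvents this entirely by using the fact that over a \emph{model} $M$, elements with the same type have Lascar distance at most $2$: pick $e$ realizing (the restriction to $Mbb'$ of) a global coheir of $\tp(b/M)$; then $(b,e)$ and $(b',e)$ each start an $M$-indiscernible (Morley) sequence. Since $p$ decides $\phi(x,e)$, it must disagree with $\phi$ on one of the pairs $(b,e)$, $(b',e)$. So one reduces to the situation where $\phi(x,c_0)\wedge\neg\phi(x,c_1)\in p$ for $c_0,c_1$ that are \emph{already} the first two terms of an $M$-indiscernible sequence $\sequence{c_j}{j<\omega}$. Now the sequence of pairs $\sequence{(c_{2j},c_{2j+1})}{j<\omega}$ is $M$-indiscernible, non-dividing of $\phi(x,c_0)\wedge\neg\phi(x,c_1)$ gives $a$ with $\phi(a,c_{2j})\wedge\neg\phi(a,c_{2j+1})$ for all $j$, and $\phi(a,\cdot)$ alternates infinitely on the $M$-indiscernible $\sequence{c_j}{j<\omega}$, contradicting \cref{rem:alternation rank}. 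Note this is exactly the ``Lascar distance $2$ via coheir'' trick that the paper itself uses later in the proof that strongly bounded non-forking implies bounded non-forking; replacing your Ramsey extraction with it repairs the argument.
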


When $T$ is NIP or even NTP$_2$ --- a larger class which contains also simple theories (for the definition, see \cite[Definition 2.27]{forkingNTP2}), we have that:

\begin{fact}\label{fact:forking=dividing} \cite{forkingNTP2}
  Assume that $T$ is NTP$_2$ and let $M \vDash T$. Then forking equals dividing over $M$: if $\phi(x,y)$ is a formula over $M$ and $\phi(x,b)$ forks over $M$, then it divides over $M$. 
\end{fact}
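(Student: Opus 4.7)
The plan is to argue by contradiction: assume $\phi(x,b)$ forks over $M$ but does not divide. By definition of forking, there are formulas $\psi_i(x,c_i)$ for $i<n$, each $k_i$-dividing over $M$, such that $\phi(x,b)\vdash\bigvee_{i<n}\psi_i(x,c_i)$. For each $i$ fix an $M$-indiscernible sequence $(c_i^j)_{j<\omega}$ with $c_i^0=c_i$ witnessing $k_i$-dividing, i.e., $\{\psi_i(x,c_i^j):j<\omega\}$ is $k_i$-inconsistent. The goal is to manipulate these into a single array that is incompatible with the presumed non-dividing of $\phi(x,b)$.

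The main technical input is the existence, in NTP$_2$ theories over models, of \emph{strict non-forking extensions} (due to Ben Yaacov and Chernikov): any type over $M$ has a global $M$-invariant extension whose Morley sequences preserve the dividing witnesses of formulas realized in the type. First I would use this to upgrade $(c_i^j)_{i<n,j<\omega}$ to a \emph{mutually $M$-indiscernible} array: iteratively extend $\tp(c_0\dots c_{n-1}/M)$ to a strict non-forking type and take its Morley sequence, so that each resulting row is $M$-indiscernible, mutually indiscernible with the other rows, starts with $c_i$, and still has $\{\psi_i(x,c_i^j):j<\omega\}$ being $k_i$-inconsistent. Then, since $\phi(x,b)$ is assumed not to divide over $M$, there is an $M$-indiscernible sequence $(b_\ell)_{\ell<\omega}$ with $b_0=b$ and $\{\phi(x,b_\ell):\ell<\omega\}$ consistent; by a further application of strict-non-forking together with Ramsey and compactness I would align this sequence with the columns of the array so that $\tp(b_\ell c_0^\ell\cdots c_{n-1}^\ell/M)=\tp(b c_0\cdots c_{n-1}/M)$ for every $\ell$, and in particular $\phi(x,b_\ell)\vdash\bigvee_i\psi_i(x,c_i^\ell)$.

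Let $a$ realize $\{\phi(x,b_\ell):\ell<\omega\}$. For each $\ell<\omega$ pick $i(\ell)<n$ with $\vDash\psi_{i(\ell)}(a,c_{i(\ell)}^\ell)$. By the pigeonhole principle there is a fixed $i<n$ with $i(\ell)=i$ for infinitely many $\ell$; choosing $k_i$ such $\ell$'s gives $k_i$ formulas from $\{\psi_i(x,c_i^j):j<\omega\}$ simultaneously satisfied by $a$, contradicting $k_i$-inconsistency of that row.

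The main obstacle is the array construction in the second step, which is the only place where the NTP$_2$ hypothesis is genuinely used (via strictly invariant extensions) and where the assertion could fail for arbitrary theories. Aligning the $(b_\ell)$-sequence column-by-column with the array in the third step is also delicate and requires carefully choosing the strict-Morley construction so that both $b$ and the $c_i$'s are moved coherently while preserving the types involved; once that alignment is secured, the pigeonhole finish is straightforward.
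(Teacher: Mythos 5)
This statement is cited in the paper from Chernikov--Kaplan's \emph{Forking and dividing in NTP$_2$ theories} and is not reproved; what the paper does record are the two ingredients you lean on, namely the existence of strict non-forking (even coheir) extensions over models (\cref{fact:non-coforking coheir}) and the fact that strict Morley sequences are universal witnesses for dividing (\cref{fact:strict MS witnesses}). Your high-level strategy --- take the forking decomposition $\phi(x,b)\vdash\bigvee_{i<n}\psi_i(x,c_i)$, build a strict Morley sequence, use the witnessing property to get $k_i$-inconsistency of each row, and finish by pigeonhole --- is indeed the standard deduction of forking $=$ dividing from those two ingredients, and it is essentially what happens at the end of the Chernikov--Kaplan argument. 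So the skeleton is sound.

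The presentation, however, is needlessly complicated and slightly off in one place. You build a ``mutually $M$-indiscernible'' array from $\tp(c_0\dots c_{n-1}/M)$ first, then separately invoke a non-dividing witness $(b_\ell)$ for $\phi(x,b)$ and try to align it with the columns. Two comments. First, mutual indiscernibility of the rows is neither what a Morley sequence of $\tp(\bar c/M)$ delivers nor what the argument needs; all you use is per-row $k_i$-inconsistency. Second, and more importantly, the alignment step as written is a detour: there is no need to import an external $(b_\ell)$ and then force the type equality. Simply take a strict Morley sequence $(b^j\bar c^j)_{j<\omega}$ of $\tp(b\bar c/M)$ from the start, with $(b^0\bar c^0)=(b\bar c)$. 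Because each $\psi_i(x,c_i)$ divides over $M$ and can be read as a formula in the whole tuple $b\bar c$ (ignoring the other coordinates), \cref{fact:strict MS witnesses} gives directly that $\{\psi_i(x,c_i^j):j<\omega\}$ is $k_i$-inconsistent; the type equality $\tp(b^j\bar c^j/M)=\tp(b\bar c/M)$ (hence $\phi(x,b^j)\vdash\bigvee_i\psi_i(x,c_i^j)$) is automatic; and $(b^j)_j$ is an $M$-indiscernible sequence starting at $b$. Your pigeonhole then shows $\{\phi(x,b^j):j<\omega\}$ is inconsistent, i.e.\ $(b^j)$ directly witnesses that $\phi(x,b)$ divides over $M$ --- you don't even need the argument by contradiction (though it also works: under the non-dividing hypothesis $\{\phi(x,b^j)\}$ would have to be consistent, contradicting the pigeonhole). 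Finally, be aware that the genuinely hard content here is not this deduction but the two black boxes themselves: the existence of strictly invariant types over models in NTP$_2$ (which in Chernikov--Kaplan rests on the ``broom lemma'' and the fact that forking over models implies quasi-dividing) and the universal-witness lemma. Those are what your sketch, like the present paper, imports without proof; attributing them to Ben Yaacov--Chernikov is also a slight misattribution.
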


One of the main tools used to show this was the existence of global strictly invariant types since Morley sequences in them are universal witnesses for dividing.

\begin{definition}
  A global type $w(y) \in S(\U)$ is \defn{strictly invariant} over a model $M$ if $w(y)$ is invariant over $M$ and for any set $C \supseteq M$, if $a \vDash w|_C$ then $\tp(C/Ma)$ does not fork over $M$.

  When $(I,<)$ is a linear order and $\sequence{b_i}{i \in I}$ is a Morley sequence of an $M$-strictly invariant type $w$ over $M$ (see e.g., \cite[Section 2.2]{densecompression} for the definition of a Morley sequence), we call it a \defn{strict Morley sequence} over $M$.

\end{definition}

\begin{fact}\cite[Lemma 3.14]{forkingNTP2}\label{fact:strict MS witnesses}
  Assume that $T$ is NTP$_2$, $M \vDash T$ and $\phi(x,y)$ a formula over $M$. Suppose that for some $b \in \U^y$, $\phi(x,b)$ divides over $M$.
  
  Then, if $\sequence{b_i}{i<\omega}$ is a strict Morley sequence over $M$, then $\set{\phi(x,b_i)}{i<\omega}$ is inconsistent.  
\end{fact}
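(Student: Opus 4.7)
The plan is to argue the contrapositive: assume that $\set{\phi(x,b_i)}{i<\omega}$ is consistent, and construct an inp-pattern of depth $\omega$, which contradicts NTP$_2$. Let $w \in S(\U)$ be the $M$-strictly invariant global type whose Morley sequence is $\sequence{b_i}{i<\omega}$; the intended reading is that $b \vDash w|_M$, so after an $M$-automorphism we may take $b_0 = b$. Fix $k$ such that $\phi(x,b)$ $k$-divides over $M$, witnessed by an $M$-indiscernible sequence $\sequence{c_j}{j<\omega}$ with $c_0 = b_0$ and $\set{\phi(x,c_j)}{j<\omega}$ $k$-inconsistent.

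The crux is a transfer step: for each $i<\omega$, I would produce an $Mb_{<i}$-indiscernible sequence $\sequence{b_{i,j}}{j<\omega}$ with $b_{i,0} = b_i$ and $\set{\phi(x,b_{i,j})}{j<\omega}$ $k$-inconsistent. This is where strict invariance is essential: from $b_i \vDash w|_{Mb_{<i}}$ and the strict invariance clause applied with $C = Mb_{<i}$ and $a = b_i$, one obtains that $\tp(b_{<i}/Mb_i)$ does not fork over $M$. Using this, realize an $M$-indiscernible copy $\sequence{c'_j}{j<\omega}$ of $\sequence{c_j}{j<\omega}$ with $c'_0 = b_i$ (possible since $b_i \equiv_M b_0$), arranged together with $b_{<i}$ so that the non-forking configuration is preserved; a Ramsey/extraction argument then yields an $Mb_{<i}$-indiscernible subsequence still beginning at $b_i$ and still $k$-inconsistent for $\phi$.

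Having constructed all rows, the array $(b_{i,j})_{i,j<\omega}$ has each row $k$-inconsistent for $\phi$. Its first column is $\sequence{b_i}{i<\omega}$, which is consistent by the contradiction hypothesis; every other column $\sequence{b_{i,j(i)}}{i<\omega}$ is consistent by iterated use of the row-wise indiscernibility over $Mb_{<i}$ to replace $b_{i,0}$ by $b_{i,j(i)}$ one index at a time without destroying consistency of the already-chosen partial column. This is precisely an inp-pattern of depth $\omega$ for $\phi(x,y)$, contradicting NTP$_2$.

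The main obstacle is executing the transfer step cleanly: strict invariance yields non-forking of $\tp(b_{<i}/Mb_i)$, but converting this into a genuinely $Mb_{<i}$-indiscernible dividing witness starting at $b_i$ requires a careful extension-and-extraction argument, and this is the unique place where strict invariance --- rather than mere invariance or finite satisfiability --- is actually needed.
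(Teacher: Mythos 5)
Your contrapositive strategy --- assume $\set{\phi(x,b_i)}{i<\omega}$ is consistent, build an inp-pattern of depth $\omega$ for $\phi$, and contradict NTP$_2$ --- is the right one, and it is essentially the approach of the cited source \cite{forkingNTP2}. Your transfer step is also essentially sound in substance, though the mechanism is not Ramsey extraction (extraction does not keep the first entry pinned at $b_i$). What you actually want is the standard characterization of non-dividing: since strict invariance gives that $\tp(b_{<i}/Mb_i)$ does not fork, hence does not divide, over $M$, for any $M$-indiscernible $k$-inconsistent sequence $\sequence{c_j}{j<\omega}$ with $c_0=b_i$ there is a copy $b'_{<i}\equiv_{Mb_i}b_{<i}$ over which the sequence is indiscernible; an $Mb_i$-automorphism then produces an $Mb_{<i}$-indiscernible $k$-inconsistent row starting at $b_i$. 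So $\phi(x,b_i)$ divides over $Mb_{<i}$ with the same $k$, as you need.

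The real gap is in the path-consistency step. Making row $i$ indiscernible only over $Mb_{<i}$ --- that is, over the first entries of the earlier rows --- is too weak to carry out the ``replace one index at a time'' argument. At the step where you replace $b_{i,0}$ by $b_{i,\eta(i)}$ you invoke an automorphism $\sigma\in\Aut(\U/Mb_{<i})$ with $\sigma(b_{i,0})=b_{i,\eta(i)}$; but the entries $b_{i',\eta(i')}$ already substituted at earlier steps (whether $i'<i$ or $i'>i$, depending on the direction of your induction) do not lie in $Mb_{<i}$ and will generally be moved by $\sigma$, so the partial path built so far is not preserved. What is needed is a mutual-indiscernibility condition: row $i$ should be indiscernible over $M$ together with the \emph{full} other rows (or at least over all full rows built before it, in an outside-in construction), not merely over an initial segment of the first column. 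Producing such an array while keeping $b_i$ as the first entry of row $i$ and each row a $k$-inconsistent dividing witness is where the substantive work lies: one must propagate non-forking of $\tp(\,\cdot\,/Mb_i)$ not just to $b_{<i}$ but to the entire rows already constructed, and this is precisely what the argument around \cite[Lemma 3.14]{forkingNTP2} carries out. As written, your row-wise condition does not give an inp-pattern.
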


We will need a strict invariant type which is also a coheir. The existence of such a type is implied by \cite[Proposition 3.7 (1)]{forkingNTP2}, applied to coheir independence, using the fact that forking over models implies quasi-dividing in NTP$_2$, see \cite[Corollary 3.13]{forkingNTP2} which is precursor to \cref{fact:forking=dividing}. We state this explicitly:

\begin{fact} \label{fact:non-coforking coheir}
  Assume that $T$ is NTP$_2$ and let $M \vDash T$. Then for any type $q(y) \in S(M)$ there is a global type $w(y) \in S(\U)$ extending $q(y)$ such that:
  \begin{itemize}
    \item $w$ is finitely satisfiable in $M$. 
    \item If $C \supseteq M$ and $a \vDash w|_C$ then $\tp(C/Ma)$ does not fork over $M$. 
  \end{itemize}
\end{fact}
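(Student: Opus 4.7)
The plan is a straight application of the main results of \cite{forkingNTP2}. The easy half is that any $q\in S(M)$ extends to a global coheir over $M$, by a standard ultrafilter argument on the $M$-definable subsets of $M^y$. The hard half is to additionally arrange the strictness clause, namely that for any $C\supseteq M$ and $a\vDash w|_C$, $\tp(C/Ma)$ does not fork over $M$.

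For the strictness clause I appeal to \cite[Proposition 3.7(1)]{forkingNTP2}. Applied to the coheir-independence relation in an NTP$_2$ theory, that proposition supplies the following local fact: for any parameter set $B\supseteq M$, a coheir-independent configuration $a\ind^u_M B$ can be refined to one that additionally satisfies $\tp(B/Ma)$ non-forking over $M$. The engine driving the refinement is that in NTP$_2$ forking over models equals quasi-dividing (\cite[Corollary 3.13]{forkingNTP2}), which is also the technical heart behind \cref{fact:forking=dividing}.

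To upgrade this local statement to a global $w$, I run a routine compactness argument. Let $F$ be the set of formulas $\theta(\bar{x},y)$ over $M$ such that $\theta(\bar{x},a)$ forks over $M$ when $a\vDash q$ (well-defined since $\tp(a/M)=q$). Form the partial type $\Pi(y)$ over $\U$ consisting of $q(y)$, the formulas $\neg\phi(y)$ for every formula $\phi(y)$ over $\U$ with $\phi(M)=\emptyset$, and the formulas $\neg\theta(c,y)$ for every $c\in\U$ and every $\theta\in F$. A finite subset of $\Pi$ mentions only finitely many parameters from $\U$, which we collect into a single finite tuple $c_0$; the local statement above then supplies $a\vDash q$ realizing this finite subset. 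By compactness $\Pi$ is consistent, and any completion $w\in S^y(\U)$ is the desired type: the second batch of formulas forces $w$ to be finitely satisfiable in $M$, and the third forces $\tp(C/Ma)$ to contain no formula $\theta(\bar{x},a)$ with $\theta\in F$ for any $C\supseteq M$ and $a\vDash w|_{MC}$, hence to be non-forking over $M$.

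The substantive obstacle is the local refinement of coheir-independence to also satisfy non-forking of the parameters, which is where NTP$_2$ is essential. Without NTP$_2$ the argument behind \cite[Proposition 3.7(1)]{forkingNTP2} breaks down, since forking may strictly contain dividing, and the existence of a type that is simultaneously a coheir and strictly invariant is genuinely a phenomenon of NTP$_2$.
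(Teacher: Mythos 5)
Your proposal is correct and follows the same route as the paper: Fact~\ref{fact:non-coforking coheir} is presented there as a citation to \cite[Proposition 3.7(1)]{forkingNTP2} applied to coheir independence, with \cite[Corollary 3.13]{forkingNTP2} (forking over models implies quasi-dividing in NTP$_2$) making the hypotheses apply, which is exactly the pair of references you invoke. Your explicit compactness step building the partial type $\Pi(y)$ is a sound way to package the globalization, even if the cited proposition already delivers the global type directly, so the extra work is harmless rather than erroneous.
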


\section{The proof of \texorpdfstring{\cref{thm:the main theorem intro}}{Theorem 1.6}} \label{sec:the proof}
We start with the following lemma. 
\begin{lemma} \label{lem:compressible eventually positive type}
  Suppose that $\phi(x,y)$ is NIP. Then there is some $k$ depending only on $\vc(\phi)$ such that if $I:=\sequence{a_i}{i<\omega}$ is an $\emptyset$-indiscernible sequence of $y$-tuples such that $p:=\set{\phi(x,a_i)}{i<\omega}$ is consistent, then there is a $k$-compressible type $q \in S_{\phi \down k}(I)$ such that for some $n<\omega$, for all $i>n$, $\phi(x,a_i)\in q$. 
\end{lemma}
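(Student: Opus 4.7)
The plan is to realize $p := \set{\phi(x,a_i)}{i<\omega}$ in $\U$ by some $b$, analyse the $\phi$-type $\tp_\phi(b/I)$ using the rounded-average decomposition (\cref{fact:rounded average of compressible}), and then use the alternation rank bound (\cref{fac:alternation}(\ref{enu:eventual})) to show that one of the pieces of the decomposition eventually contains $\phi(x,a_i)$ positively.

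First, let $n_0,k$ be the constants depending only on $\vc(\phi)$ supplied by \cref{fact:rounded average of compressible}. By consistency of $p$, pick $b \in \U^x$ realizing $p$ and set $r := \tp_\phi(b/I) \in S_\phi(I)$; note that $\phi(x,a_i) \in r$ for every $i<\omega$. Apply \cref{fact:rounded average of compressible} to $r$ over the parameter set $I$ to obtain $k$-compressible types $p_0,\dots,p_{n_0-1} \in S_{\phi\down k}(I)$ with $r = \ravg{\varsequence{p_j}{j<n_0}}$.

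Next, for each $j<n_0$, pick a realization $c_j \in \U^x$ of $p_j$. Since $\phi$ is NIP and $I$ is indiscernible, \cref{fac:alternation}(\ref{enu:eventual}) yields $\epsilon_j < 2$ and $m_j < \omega$ such that $\phi(c_j,a_i)^{\epsilon_j}$ holds for all $i > m_j$; equivalently, $\phi(x,a_i)^{\epsilon_j} \in p_j$ for all $i > m_j$. Set $N := \max_{j<n_0} m_j$.

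Finally, I would exploit the majority structure of the rounded average. For each $i > N$, the formula $\phi(x,a_i)$ belongs to $r$, so by definition of $\ravg{\cdot}$ we have $\Maj_{j<n_0}\bigl(\phi(x,a_i) \in p_j\bigr)$. But for $i>N$ every $p_j$ has already committed to $\phi(x,a_i)^{\epsilon_j}$, so the set $J := \set{j<n_0}{\epsilon_j = 1}$ has size $>n_0/2$; in particular $J \ne \emptyset$. Fix any $j \in J$ and set $q := p_j$. Then $q \in S_{\phi\down k}(I)$, and for every $i > N$ we have $\phi(x,a_i) \in q$, giving the conclusion with the same $k$ as in \cref{fact:rounded average of compressible}.

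There is no real obstacle: the argument is just a bookkeeping combination of rounded-average compressibility with the NIP alternation bound. The only point to be careful about is that \cref{fac:alternation}(\ref{enu:eventual}) requires an actual realization in $\U$ of each $p_j$, which is available since complete $\phi$-types over $I$ are consistent and $\U$ is sufficiently saturated.
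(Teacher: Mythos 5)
Your proof is correct and follows essentially the same route as the paper's first argument: apply \cref{fact:rounded average of compressible} to a completion of $p$ to a full $\phi$-type over $I$, use NIP eventual stabilization (\cref{fac:alternation}(\ref{enu:eventual})) to classify each component $p_j$ as eventually positive or eventually negative, and extract an eventually-positive one from the majority. The paper additionally records a more self-contained alternative that avoids \cref{fact:rounded average of compressible} by directly constructing a $(2\vc^*(\phi)+2)$-compressible eventually-positive type by hand, but your version is fine as written.
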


\begin{proof}
  By \cref{fact:rounded average of compressible}, $p$ is the rounded average of $n$ types in $S_{\phi\down k}(A)$ for some $k,n$ which depend only on $\vc(\phi)$. 
  By \cref{fac:alternation}(\ref{enu:eventual}), each of these types must be eventually positive or eventually negative. However, since $p$ is eventually positive (in fact all the instances of $\phi$ in $p$ are positive), and $p$ is the rounded average of these types, at least one (and in fact the majority) of them has to be eventually positive. 
  
  Here is a more direct argument which does not use \cref{fact:rounded average of compressible} (this argument was used in its proof\footnote{We thank Pierre Simon for reminding us of the existence of a more direct proof}). The idea is to take a $\phi$-type which first alternates maximally and then is constantly true. More precisely, let $m= 2\vc^*(\phi)+3$. Let $l<m$ be maximal such that there is some $q_0(x) \in S_\phi(\set{a_i}{i<m})$ such that $q_0 \vdash \phi(x,a_i) \leftrightarrow  \neg \phi(x,a_{i+1})$ for $i < l$
  and $q_0(x) \vdash  \phi(x,a_i)$ for $i \in [l,m)$,
  
  Note that $p|_{a_{<m}}$ is an example of such a type with $l=0$ so that such an $l$ and $q_0$ exist. By \cref{rem:alternation rank}, $l \leq  2\vc^*(\phi) < m-2$. 

  Let $q = q_0 \cup \set{\phi(x,a_i)}{l\leq i}$. Then $q$ consistent and moreover $(m-1)$-compressible. 
  
  Indeed, let $l< n<\omega$. We show that $q|_{a_{\leq l}\cup \{a_{n},\dots,a_{n+m-l-3}\}} \vdash q|_{a_{<n}}$. Otherwise, by indiscernibility, $q|_{a_{\leq l}\cup \{a_{l+2},\dots,a_{m-1}}\} \cup \{\neg \phi(x,a_{l+1})\}$ is consistent, contradicting the choice of $l$. Finally, note that the left-hand side of this implication is consistent by indiscernibility (and since $q_0$ is consistent).  
\end{proof}

The following is a generalization of \cite[Proposition 4.1]{DistalDefinablePQ}.
\begin{proposition} \label{prop:lemma as in Gareth Lotte}
  Suppose that $T$ is NIP, $M\vDash T$, $\phi(x,y)$ a formula over $M$, $b\in \U^y$ and that $\phi(x,b)$ does not fork over $M$. Let $q(y) = \tp(b/M)$.
  
  Then there is a formula $\theta(x,z)$ over $M$ and a type $r(z)\in S(M)$ such that:
  \begin{enumerate}
    \item For any $c \vDash r$, $\theta(x,c)$ does not fork over $M$.
    \item For any finite set $B$ of realizations of $q$, there is some $c \vDash r$ such that $\U\vDash \forall x (\theta(x,c) \to \phi(x,b'))$ for any $b' \in B$. 
  \end{enumerate}

  \begin{proof}
    By \cref{fact:non-coforking coheir} there is a global type $w(y) \in S(\U)$ extending $q(y)$ such that:
    \begin{itemize}
      \item $w$ is finitely satisfiable in $M$. 
      \item If $C \supseteq M$ and $a \vDash w|_C$ then $\tp(C/Ma)$ does not fork over $M$. 
    \end{itemize}
    Let $J = \sequence{a_i}{i \in \Z}$ be a Morley sequence of $w$ over $M$, i.e., $a_i \vDash w |_{Ma_{<i}}$ for all $i \in \Z$. For $i < \omega$, let $b_i = a_{-i}$ and let $I = \sequence{b_i}{i<\omega}$. Note that for all $i<\omega$:
    \begin{enumerate}[label={(\roman*)}]
      \item \label{enu:first part fs}$\tp(b_{<i}/Mb_{\geq i})$ is finitely satisfiable in $M$ (by transitivity of coheir independence). 
      \item \label{enu:second part nf} $\tp(b_{\geq i}/Mb_i)$ does not fork over $M$. 
    \end{enumerate}
    Let $p = \set{\phi(x,b_i)}{i<\omega} \in S_\phi(I)$. As $I$ is an $M$-indiscernible sequence, $b_0 \equiv_M b$ and as $\phi(x,b)$ does not fork over $M$, $p$ is consistent. 
    
    By \cref{lem:compressible eventually positive type} applied to $p$, there is some $k<\omega$ and some $r \in S_{\phi\down k}(I)$ such that $r$ is eventually positive: for some $s<\omega$, $\phi(x,b_i) \in r$ for all $i>s$. (In fact, the direct proof of \cref{lem:compressible eventually positive type} allows us to set $k=s=2\vc^*(\phi)+2$.)
    
    
    Thus, for some $A \subseteq \omega$ of size $|A|=k$, $r|_{\set{b_i}{i\in A}} \vdash r|_{b_{\leq s+k+1}}$. In particular, for some $s< j \leq s+k+1$, $j \notin A$.
    
    Write $A = A_0 \cup A_1$ where $A_0 = A \cap j$ and $A_1 = A \setminus A_0$. By indiscernibility and the choice of $s$, we get that $r|_{\set{b_i}{i\in A_0} \cup \set{b_i}{j<i\leq j+t}} \vdash \phi(x,b_j)$ where $t = |A_1|$. 
  
    Let $u = \sequence{u_i}{i\in A_0}$ be a tuple of variables of the sort $y$, and let $\theta_0(x,u) = \bigwedge_{i \in A_0} \phi(x,u_i)^{(\phi(x,b_i) \in r)}$. Let $z = \sequence{z_i}{i<t}$ be a sequence of variables of the sort $y$ and let $\theta_1(x,z) = \bigwedge_{i<t} \phi(x,z_i)$. We claim that letting $r(z) = \tp(b_{<t}/M)$, there is some $m \in M^{u}$ such that the pair $r(z)$ and $\theta_m(x,z) := \theta_0(x,m)\land \theta_1(x,z)$ satisfies the conclusion of the proposition. 

    By ``lowness'' (\cref{fac:alternation}(\ref{enu:low})), there is some number $f <\omega$ such that for any $m \in M^u$, if $\set{\theta_0(x,m)\land \phi(x,b_i)}{i<f}$ is consistent, then $\set{\theta_0(x,m)\land \phi(x,b_i)}{i<\omega}$ is consistent (in fact $f$ could be chosen to be $\vc^*(\phi)+1$). 
    Fix such an $m \in M^u$. Since strict Morley sequences are universal witnesses for dividing (\cref{fact:strict MS witnesses}) and $J$ is a strict Morley sequence over $M$, $\theta_0(x,m) \land \phi(x,b_0)$ does not divide over $M$. Since forking equals dividing over $M$ by \cref{fact:forking=dividing} and non-forking over models is the same as invariance by \cref{fac:forking in NIP}, it follows that $\theta_m(x,b_{<t})$ does not fork over $M$. 

    Recall that by \cref{enu:first part fs}, $\tp(\sequence{b_i}{i\in A_0}/Mb_{>j})$ is finitely satisfiable in $M$. Thus, there is $m\in M^u$ such that:
    \begin{itemize}
      \item $\theta_m(x,\sequence{b_i}{j+1\leq i < j+1+t}) \vdash \phi(x,b_j)$.
      \item $\theta_0(x,m)\land \bigwedge_{j+1\leq i < j+1+f} \phi(x,b_i)$ is consistent. 
    \end{itemize}
    
    Thus, by the previous paragraph and indiscernibility, letting $\theta(x,z) = \theta_m(x,z)$, $\theta(x,b_{<t})$ does not fork over $M$.
    
    By indiscernibility, it follows that $\theta(x,b_{1\leq i < t+1}) \vdash \phi(x,b_0)$. Recall that by \cref{enu:second part nf}, $\tp(b_{1\leq i < t+1}/Mb_0)$ does not fork over $M$. Now, given a finite set $B$ of realization of $q(y)$, by extension, there is some $c \vDash r(z)$ such that $\tp(c/MBb_0)$ does not fork over $M$. As non-forking over models is the same as invariance by \cref{fac:forking in NIP}, we get that $\theta_m(x,c) \vdash \phi(x,b')$ for all $b'\in B$ as required. 
  \end{proof}
\end{proposition}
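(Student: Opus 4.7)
The plan is to produce $\theta$ in three stages: build a two-sided Morley sequence of a carefully chosen global extension of $q$, extract a compressible eventually positive type along the sequence via \cref{lem:compressible eventually positive type}, and then swap the parameters on the ``left'' half of the compressing set for parameters from $M$ using finite satisfiability. For the setup, I would apply \cref{fact:non-coforking coheir} to $q$ to obtain a global type $w(y) \in S(\U)$ extending $q$, finitely satisfiable in $M$, and such that whenever $a \vDash w|_C$ for $C \supseteq M$, $\tp(C/Ma)$ does not fork over $M$. Taking a Morley sequence $(a_i)_{i\in\Z}$ of $w$ over $M$ and setting $b_i := a_{-i}$ yields a sequence $I := (b_i)_{i<\omega}$ with two complementary features: by transitivity of coheir-independence, $\tp(b_{<i}/Mb_{\geq i})$ is finitely satisfiable in $M$, while strict invariance makes $\tp(b_{\geq i}/Mb_i)$ non-forking over $M$. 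Since $b_0 \equiv_M b$ and $\phi(x,b)$ does not fork over $M$, \cref{fac:forking in NIP} makes $p := \{\phi(x,b_i) : i<\omega\}$ consistent.

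Next, \cref{lem:compressible eventually positive type} applied to $p$ produces $k, s$ (depending only on $\vc(\phi)$) and a $k$-compressible $r \in S_{\phi\downarrow k}(I)$ containing $\phi(x,b_i)$ for all $i>s$. I would then choose a finite $A\subseteq\omega$ of size $k$ whose restriction to $I$ determines $r|_{b_{\leq s+k+1}}$; since $|A|=k < k+1 = |(s, s+k+1]|$ there exists $j\in(s,s+k+1]\setminus A$, and writing $A = A_0 \sqcup A_1$ along $j$ with $t := |A_1|$, indiscernibility shifts $A_1$ into $(j, j+t]$ so that, with $\theta_0(x,u)$ encoding the $\phi$-pattern prescribed by $r$ on $A_0$ and $\theta_1(x,z) := \bigwedge_{i<t}\phi(x,z_i)$,
\[\theta_0(x, b_{A_0}) \wedge \theta_1(x, b_{(j,j+t]}) \vdash \phi(x, b_j).\]

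Since the parameters of $\theta_0$ all sit on the finitely-satisfiable left side of $b_j$, finite satisfiability of $\tp(b_{A_0}/Mb_{>j})$ would let me pick $m \in M^u$ for which the implication above still holds with $m$ in place of $b_{A_0}$, and simultaneously $\theta_0(x, m) \wedge \bigwedge_{i<f} \phi(x, b_{j+1+i})$ is consistent for $f := \vc^*(\phi)+1$. I then set $\theta(x, z) := \theta_0(x, m) \wedge \theta_1(x, z)$ and $r(z) := \tp(b_{<t}/M)$, shifted to start at $j+1$. For (1), lowness (\cref{fac:alternation}(\ref{enu:low})) upgrades the partial consistency to full consistency along the strict Morley sequence, so \cref{fact:strict MS witnesses} gives non-dividing of $\theta(x, b_{<t})$, and then \cref{fact:forking=dividing} combined with \cref{fac:forking in NIP} yields that $\theta(x, c)$ does not fork over $M$ for any $c \vDash r$. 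For (2), given a finite $B \subseteq q(\U)$, extension provides $c \vDash r$ with $\tp(c/MBb_0)$ non-forking (hence $M$-invariant) over $M$, and invariance transports the implication to $\theta(x, c) \vdash \phi(x, b')$ for every $b' \in B$. The main obstacle is coordinating the two roles of $I$ --- finitely satisfiable on the left so the parameters of $\theta_0$ can be pulled into $M$, and strictly invariant on the right so that $\theta(x, c)$ is certified non-forking --- which is precisely what \cref{fact:non-coforking coheir} provides, and the pigeonhole pulling $j$ from a gap in $A$ is what makes the split feasible.
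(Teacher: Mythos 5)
Your proposal is correct and follows essentially the same route as the paper's proof: the same two-sided Morley sequence of a strictly invariant coheir, the same extraction of an eventually positive $k$-compressible $\phi$-type via \cref{lem:compressible eventually positive type}, the same pigeonhole split of the compressing set along a gap $j$, the same replacement of the left-hand parameters by elements of $M$ via finite satisfiability combined with lowness, and the same use of \cref{fact:strict MS witnesses}, \cref{fact:forking=dividing}, and \cref{fac:forking in NIP} for items (1) and (2). The only cosmetic difference is the remark that $r(z)$ is ``shifted to start at $j+1$,'' which is redundant since $\tp(b_{<t}/M)=\tp(b_{(j,j+t]}/M)$ by indiscernibility.
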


We now deduce \cref{thm:the main theorem intro} which we repeat below. The proof is exactly the same as in \cite[after Proposition 4.1]{DistalDefinablePQ}, but we break it down for use in \cref{sec:uniformity}.

\begin{proposition}\label{prop:pq + implication => consistent}
  Let $M$ be any structure and let $\phi(x,y),\psi(y),\theta(x,z),\zeta(z)$ be formulas over $M$. Suppose that:
  \begin{enumerate}
    \item $\theta(x,z)$ is NIP. 
    \item $(\theta,\zeta)$ has the $(p,q)$-property for some $\vc^*(\theta(x,z))< q \leq p$ and let $n = \Npq(p,q)$.
    \item \label{enu:implication} For every set $B \subseteq \psi(M)$ such that $|B|\leq n$, there is some $c\in \zeta(M)$ such that $M\vDash \forall x (\theta(x,c)\to \phi(x,b))$ for all $b\in B$.
  \end{enumerate}
  Then $\set{\phi(x,b)}{b\in \psi(M)}$ is consistent. 
\end{proposition}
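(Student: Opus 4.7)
The plan is to establish consistency by compactness: I will show that for every finite $B_0 \subseteq \psi(M)$ there is some $a \in M^x$ with $M \vDash \phi(a,b)$ for all $b \in B_0$. When $|B_0| \le n$, hypothesis (3) applied to $B_0$ directly yields $c \in \zeta(M)$ with $\theta(M,c) \subseteq \bigcap_{b \in B_0} \phi(M,b)$; since $(\theta,\zeta)$ has the $(p,q)$-property, $\theta(M,c)$ is nonempty, so any $a \in \theta(M,c)$ works. So I focus on the substantive case $|B_0| > n$.

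For each $n$-subset $B$ of $B_0$, use hypothesis (3) to choose $c_B \in \zeta(M)$ with $\theta(M,c_B) \subseteq \bigcap_{b \in B} \phi(M,b)$, and let $D = \set{c_B}{B \subseteq B_0, |B| = n}$, a finite subset of $\zeta(M)$. Applying \cref{lem:pq for pairs of formulas} to the pair $(\theta,\zeta)$, whose $(p,q)$-property together with $\vc^*(\theta) < q$ and $n = \Npq(p,q)$ are exactly its hypotheses, I obtain $a_0,\dots,a_{n-1} \in M^x$ such that for every $c \in D$ some $a_i$ satisfies $\theta(a_i,c)$. In particular, for every $n$-subset $B \subseteq B_0$ there is $i < n$ with $\theta(a_i, c_B)$, and hence $\phi(a_i,b)$ for all $b \in B$. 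Setting $C_i := \set{b \in B_0}{M \vDash \phi(a_i,b)}$, this shows that every $n$-subset of $B_0$ is contained in some $C_i$.

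It remains to deduce that some $C_i$ must be all of $B_0$, so that the corresponding $a_i$ realizes $\set{\phi(x,b)}{b \in B_0}$. This reduces to a short combinatorial claim: if $|B_0| \ge n$ and $C_0,\dots,C_{n-1} \subseteq B_0$ cover all $n$-subsets of $B_0$ in the above sense, then $C_i = B_0$ for some $i$. Suppose for contradiction that every $M_i := B_0 \setminus C_i$ is nonempty. Greedily picking one element of each $M_i$ produces a hitting set $S \subseteq B_0$ of size at most $n$, which we extend to an $n$-subset $S' \subseteq B_0$ using $|B_0| \ge n$. Then $S'$ meets every $M_i$, so $S' \not\subseteq C_i$ for any $i$, contradicting the covering property.

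The only conceptual step is seeing hypothesis (3) as providing $n$-wise Helly witnesses $\theta(M,c_B)$ inside the $n$-wise intersections of the $\phi$-family; the standard $(p,q)$-theorem for $(\theta,\zeta)$, packaged as \cref{lem:pq for pairs of formulas}, then produces $n$ transversal elements $a_0,\dots,a_{n-1}$, and the short combinatorial argument above promotes one of them into a single common realizer for the whole of $B_0$. I do not expect any serious obstacle beyond correctly assembling these ingredients.
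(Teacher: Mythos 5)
Your proof is correct, and it takes a genuinely different route from the paper's. The paper applies compactness \emph{first}, on the $\theta$-side: it uses \cref{lem:pq for pairs of formulas} on finite subsets of $\zeta(M)$ and then compactness to produce a single transversal $a_0,\dots,a_{n-1}\in\U^x$ such that every $c\in\zeta(M)$ satisfies $\theta(a_i,c)$ for some $i$; it then finishes with a short pigeonhole argument (if each $a_i$ misses some $b_i\in\psi(M)$, apply hypothesis (3) to $B=\{b_0,\dots,b_{n-1}\}$ and derive a contradiction). You instead apply compactness \emph{last}, on the $\phi$-side: for each finite $B_0\subseteq\psi(M)$ you build a tailored finite $D\subseteq\zeta(M)$ of witnesses $c_B$ indexed by $n$-subsets of $B_0$, invoke the lemma only on $D$ (so you stay entirely inside $M$), and then a small combinatorial covering argument --- every $n$-subset of $B_0$ is contained in some $C_i$, hence some $C_i=B_0$ --- produces a single realizer in $M^x$ for all of $B_0$. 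The paper's version is shorter and has fewer moving parts (one compactness step, one pigeonhole); yours buys a slightly stronger intermediate fact (each finite fragment of the type is realized in $M$ itself, not merely in $\U$) at the cost of the extra combinatorics. One minor stylistic note: the separate treatment of $|B_0|\le n$ is indeed needed, since $\psi(M)$ may have fewer than $n$ elements, so your covering lemma's hypothesis $|B_0|\ge n$ could fail --- you handled this correctly.
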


\begin{proof}
  By \cref{lem:pq for pairs of formulas} we get that for any finite subset $C \subseteq \zeta(M)$ there are $a_0,\dots,a_{n-1} \in M^x$ such that for all $c \in C$, $\theta(a_i,c)$ holds for some $i<n$. By compactness, there are $a_0,\dots,a_{n-1} \in \U^x$ such that for all $c\in \zeta(M)$, $\theta(a_i,c)$ holds for some $i<n$. 
  
  We claim that for some $i<n$, $\phi(a_i,b')$ holds for all $b'\in \psi(M)$. Otherwise, for each $i<n$ there is $b_i \in \psi(M)$ such that $\neg \phi(a_i,b_i)$. Let $B = \set{b_i}{i<n}$, and let $c$ be as in (\ref{enu:implication}). By the previous paragraph, for some $i<n$, $\theta(a_i,c)$ holds, and thus $\phi(a_i,b')$ for all $b'\in B$ and in particular $\U \vDash \phi(a_i,b_i)$, a contradiction.
\end{proof}

\begin{theorem} [\cref{thm:the main theorem intro}]\label{thm:the main theorem}
  Suppose that $T$ is NIP and that $M \vDash T$. Suppose that $\phi(x,y)$ is a formula over $M$ and $b\in \U^y$ is such that $\phi(x,b)$ does not fork over $M$. Then there is a formula $\psi(y)\in \tp(b/M)$ such that $\set{\phi(x,c)}{\U\vDash \psi(c)}$ is consistent. 
\end{theorem}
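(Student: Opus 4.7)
My strategy is to combine \cref{prop:lemma as in Gareth Lotte} with \cref{prop:pq + implication => consistent} via a compactness argument, using \cref{fac:dividing type definable} to produce the auxiliary $(p,q)$-data required by the latter.

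First, I apply \cref{prop:lemma as in Gareth Lotte} to $\phi(x,b)$ to obtain a formula $\theta(x,z)$ over $M$ and a complete type $r(z) \in S(M)$ such that (a) for every $c \vDash r$, $\theta(x,c)$ does not fork over $M$, and (b) for every finite set $B$ of realizations of $q := \tp(b/M)$ there exists $c \vDash r$ with $\theta(x,c) \vdash \phi(x,b')$ for each $b' \in B$. Since $T$ is NIP, $\theta$ is NIP, and since non-forking equals non-dividing over $M$ by \cref{fact:forking=dividing}, \cref{fac:dividing type definable} (applied in the expansion of $\L$ by constants for $M$, so as to treat $\theta$ as parameter-free) yields a formula $\zeta_0(z) \in r(z)$ such that $\theta(x,c')$ does not divide over $M$ for every $c' \in \zeta_0(\U)$, and $(\theta,\zeta_0)$ has the $(p_0,q_0)$-property for $q_0 := \vc^*(\theta)+1$ and some $p_0 \geq q_0$. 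Set $n := \Npq(p_0,q_0)$.

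Next, I use compactness to convert the ``finite $B$'' formulation of (b) into a single-formula statement. Consider $\pi(\bar y) := \bigcup_{i<n} q(y_i)$ together with the partial type
\[ \Sigma(\bar y) := \set{\neg \exists z \bigl(\zeta(z) \land \bigwedge_{i<n} \forall x\,(\theta(x,z) \to \phi(x,y_i))\bigr)}{\zeta \in r}.\]
Item (b) says that $\pi \cup \Sigma$ is inconsistent, since any realization of $\pi$ admits a witness $c \vDash r$. Compactness together with the closure of $q$ and $r$ under finite conjunctions then yields $\psi(y) \in q$ and $\zeta_1(z) \in r$ such that
\[ \bigwedge_{i<n}\psi(y_i) \;\vdash\; \exists z\bigl(\zeta_1(z) \land \bigwedge_{i<n}\forall x\,(\theta(x,z) \to \phi(x,y_i))\bigr). \]

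Finally, set $\zeta(z) := \zeta_0(z) \land \zeta_1(z)$, which is still in $r$. Because $\zeta \vdash \zeta_0$, the family $\set{\theta(x,c)}{c \in \zeta(M)}$ consists of nonempty sets (non-dividing formulas are consistent) and inherits the $(p_0,q_0)$-property from $\zeta_0$. By elementarity ($M \prec \U$), for any $b_0,\dots,b_{n-1} \in \psi(M)$ there is $c \in \zeta(M)$ with $M \vDash \forall x\,(\theta(x,c) \to \phi(x,b_i))$ for all $i<n$. Hence the hypotheses of \cref{prop:pq + implication => consistent} are satisfied for $(\phi,\psi,\theta,\zeta)$, yielding consistency of $\set{\phi(x,b)}{b \in \psi(M)}$; since $\psi$ is over $M$, a further application of elementarity shows that $\set{\phi(x,c)}{\U \vDash \psi(c)}$ is consistent as well. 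I expect the main obstacle to be the compactness step, where one must simultaneously extract $\psi \in q$ and $\zeta_1 \in r$ from the ``for every finite $B$'' formulation of \cref{prop:lemma as in Gareth Lotte}(2), and then reconcile the resulting $\zeta_1$ with the $\zeta_0$ that carries the $(p,q)$-data, so that \cref{prop:pq + implication => consistent} can be invoked.
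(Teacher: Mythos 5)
Your plan is essentially the paper's own: apply \cref{prop:lemma as in Gareth Lotte}, use \cref{fac:dividing type definable} (with the consequence recorded there via \cref{lem:equivalence of non dividing formulas and pq}) to produce a formula carrying a $(p,q)$-property, run a compactness argument to manufacture $\psi$, and close with \cref{prop:pq + implication => consistent}. However, there is a genuine gap in the compactness step as you have set it up.

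The problem is in how the formulas of $\Sigma$ combine. Writing $X(z,\bar y) := \bigwedge_{i<n}\forall x\,(\theta(x,z)\to\phi(x,y_i))$ and $\sigma_\zeta := \neg\exists z(\zeta(z)\land X)$, one checks that $\sigma_{\zeta^1}\land\cdots\land\sigma_{\zeta^k}$ is logically equivalent to $\sigma_{\zeta^1\lor\cdots\lor\zeta^k}$, not to $\sigma_{\zeta^1\land\cdots\land\zeta^k}$. So when a finite subset $\{\bigwedge_{i<n}\psi(y_i)\}\cup\{\sigma_{\zeta^1},\dots,\sigma_{\zeta^k}\}$ of $\pi\cup\Sigma$ is inconsistent, the single $\zeta_1\in r$ you extract is a \emph{disjunction} $\bigvee_j\zeta^j$ of formulas in $r$, not a conjunction (this also means the phrase ``closure of $r$ under finite conjunctions'' is not what is actually being used; what you use is upward closure of the filter $r$). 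A disjunction of formulas in $r$ need not imply $\zeta_0$. Consequently, after setting $\zeta:=\zeta_0\land\zeta_1$, the witness $c$ that elementarity produces from $\bigwedge_{i<n}\psi(y_i)\vdash\exists z(\zeta_1(z)\land X)$ lies in $\zeta_1(M)$ but there is no reason for it to lie in $\zeta_0(M)$, hence no reason for $c\in\zeta(M)$. Thus hypothesis (3) of \cref{prop:pq + implication => consistent} is not verified for the pair $(\theta,\zeta)$, and the argument does not close.

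The repair is minor and matches what the paper does: do not run compactness over all of $r$. Either fix $\zeta_0$ first and use the type $\pi(\bar y)\cup\{\neg\exists z(\zeta_0(z)\land X(z,\bar y))\}$, which is already inconsistent by \cref{prop:lemma as in Gareth Lotte}(2) together with $\zeta_0\in r$, so compactness directly yields $\psi\in q$ with $\bigwedge_{i<n}\psi(y_i)\vdash\exists z(\zeta_0(z)\land X)$; or, equivalently, replace your $\Sigma$ by $\Sigma^*:=\set{\neg\exists z\bigl(\zeta_0(z)\land\zeta(z)\land X(z,\bar y)\bigr)}{\zeta\in r}$ (still a subset of $\Sigma$, still inconsistent with $\pi$, and now every witness automatically lands in $\zeta_0(M)$). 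With that change, the rest of your argument, including the final elementarity step from $\set{\phi(x,b)}{b\in\psi(M)}$ to $\set{\phi(x,c)}{\U\vDash\psi(c)}$, is correct.
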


\begin{proof}
  Let $q(y) = \tp(b/M)$. By \cref{prop:lemma as in Gareth Lotte}, we get some $\theta(x,z)$ and $r(z) \in S(M)$ as in there. As $\theta(x,c)$ does not fork over $M$ for all $c\vDash r$, by \cref{fac:dividing type definable} there is a formula $\zeta(z)$ in $r(z)$ such that for all $c'\vDash \zeta(z)$, $\theta(x,c')$ does not divide over $M$. By \cref{lem:equivalence of non dividing formulas and pq}, there is some $p>q:=\vc^*(\theta(x,z))+1$ such that $(\theta,\zeta)$ has the $(p,q)$-property. 
  
  Let $n:=\Npq(p,q)$. By compactness and the choice of $\theta$ (and the fact that $\zeta \in r$), there is a formula $\psi(y) \in q$ such that for any set $B \subseteq \psi(M)$ of size $|B|\leq n$, there is some $c \in \zeta(M)$ such that $M \vDash \forall x (\theta(x,c) \to \phi(x,b'))$ for any $b' \in B$.

  Applying \cref{prop:pq + implication => consistent}, we are done.
 \end{proof}

\begin{corollary}
  Under the same assumptions as in \cref{thm:the main theorem}, there is a global type $p(x)$ and a formula $\psi(y)\in \tp(b/M)$ such that $p(x)\supseteq \set{\phi(x,c)}{\U\vDash \psi(c)}$ and $p$ is a non-forking heir over $M$.
\end{corollary}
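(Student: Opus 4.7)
I combine \cref{thm:the main theorem} with standard extension arguments. Applying the theorem yields $\psi(y)\in\tp(b/M)$ such that $\pi(x) := \set{\phi(x,c)}{\U\vDash \psi(c)}$ is consistent. By \cref{fac:dividing type definable} I may replace $\psi$ with an implied $M$-formula, still in $\tp(b/M)$, so that additionally each $\phi(x,c)$ for $c\vDash\psi$ does not divide over $M$, hence by \cref{fact:forking=dividing} does not fork over $M$.

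Next I observe that $\pi$ is $\Aut(\U/M)$-invariant (since $\psi$ is over $M$, the set $\psi(\U)$ is closed under $M$-automorphisms) and use this to upgrade pointwise non-forking to non-forking of the whole partial type. Indeed, suppose some finite conjunction $\bigwedge_{i<n}\phi(x,c_i)$ of formulas from $\pi$ were to divide over $M$; then by \cref{fact:forking=dividing} and the definition of dividing, an $M$-indiscernible sequence of $M$-conjugates of this conjunction would be $k$-inconsistent for some $k$. But each such conjugate is itself a conjunction of formulas in $\pi$ by invariance, so $k$ of them form a finite sub-type of the consistent $\pi$ and are therefore consistent---a contradiction. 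Hence $\pi$ does not fork over $M$, and by a standard Zorn argument it extends to a global non-forking type $p\in S(\U)$, which is automatically $M$-invariant by \cref{fac:forking in NIP}.

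To also secure the heir property, I realize $\pi$ by an element $a$ in a larger monster $\U^+\succ \U$ for which $\tp(\U/Ma)$ is finitely satisfiable in $M$. Such $a$ exists by a compactness argument combining the consistency and $\Aut(\U/M)$-invariance of $\pi$ with the existence of coheirs (e.g.\ applying \cref{fact:non-coforking coheir} to $\tp(a'/M)$ for any realization $a'\vDash\pi$). Then, by the standard heir-coheir duality, $p := \tp(a/\U)$ is an heir of $p|_M$ over $M$ and by construction extends $\pi$; the non-forking property can be arranged simultaneously by insisting that $a$ realize a fixed global non-forking extension of $\pi$ obtained in the previous paragraph.

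The main obstacle is precisely this final juggling: containment of $\pi$, the heir property, and non-forking are each routine in isolation, but combining all three into a single global type requires a careful joint compactness/coheir construction that relies on the $\Aut(\U/M)$-invariance of $\pi$ to keep the three extension procedures compatible.
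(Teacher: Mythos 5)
Your proposal correctly reduces to showing $\pi(x) := \set{\phi(x,c)}{\U\vDash\psi(c)}$ is consistent, and the observation that $\pi$ is $\Aut(\U/M)$-invariant and non-forking over $M$ is valid (the ``conjugate sub-type of a consistent partial type'' argument is sound, given \cref{fact:forking=dividing}). However, the final step --- simultaneously securing (a) $p\supseteq\pi$, (b) $p$ is an heir, and (c) $p$ is non-forking --- is precisely where the argument is left unfinished, as you yourself flag. The coheir construction you invoke via \cref{fact:non-coforking coheir} produces a \emph{coheir}, not an heir, and ``insisting that $a$ realize a fixed global non-forking extension of $\pi$'' does not explain why such an $a$ can also be arranged so that $\tp(\U/Ma)$ is finitely satisfiable in $M$. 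This is a genuine gap.

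The paper side-steps the juggling entirely by \emph{not} trying to impose containment of $\pi$ on the global type up front. It only extends the restriction $\Gamma := \set{\phi(x,b)}{b\in\psi(M)}$ (parameters from $M$, not $\U$) to a complete type $p_0\in S(M)$, then takes a global non-forking heir $p\supseteq p_0$, which exists in NTP$_2$ by \cite[Proposition 3.7(2)]{forkingNTP2}. The crucial observation --- which is what your proof is missing --- is that the heir property then \emph{automatically} upgrades containment from $\Gamma$ to all of $\pi$: if $\neg\phi(x,c)\in p$ for some $c\vDash\psi$, then $\neg\phi(x,c)\wedge\psi(c)\in p$, so by the heir property $\neg\phi(x,c')\wedge\psi(c')\in p$ for some $c'\in M^y$; but $c'\in\psi(M)$ means $\phi(x,c')\in\Gamma\subseteq p_0\subseteq p$, a contradiction. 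Once you see this, the ``joint construction'' you worry about is unnecessary: the three properties are not in tension, because the heir property does the containment work for you.
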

\begin{proof}
  By \cref{thm:the main theorem}, there is a formula $\psi(y) \in \tp(b/M)$ such that $\Gamma(x):=\set{\phi(x,b)}{b\in \psi(M)}$ is consistent. Let $p_0(x)\in S(M)$ extend $\Gamma(x)$ to a complete type over $M$. By \cite[Proposition 3.7 (2)]{forkingNTP2} applied to coheir independence, there is a non-forking heir $p(x)\in S(\U)$ extending $p_0$. It follows from the heir property that $p(x)$ contains $\set{\phi(x,c)}{\U\vDash \psi(c)}$. 
\end{proof}

The following was already essentially stated in \cite[beginning of Section 2]{simon-dpmin}:

\begin{corollary}[\cref{thm:full definable version intro}] \label{cor:definable p q explanation}
  Suppose that $T$ is NIP, $M\vDash T$ and that $\phi(x,y),\psi(y)$ are formulas over $M$. Assume that $(\phi,\psi)$ has the $(p,q)$-property for $\vc^*(\phi)<q\leq p$. Then there are formulas $\psi_0(y),\dots,\psi_{n-1}(y)$ over $M$ such that $\psi(y)$ is equivalent to the disjunction $\bigvee_{i<n} \psi_i(y)$ and for each $i<n$, $\set{\phi(x,b)}{b \in \psi_i(M)}$ is consistent. 
\end{corollary}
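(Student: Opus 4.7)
The plan is to derive this corollary from \cref{thm:the main theorem} by a direct compactness argument, as the paper itself indicates when it writes ``The above theorem follows by compactness from the following theorem.''

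First, I would translate the combinatorial hypothesis into a forking statement. By \cref{lem:equivalence of non dividing formulas and pq}, the $(p,q)$-property for $\vc^*(\phi) < q \leq p$ is equivalent to asserting that $\phi(x,b)$ does not divide over $M$ for every $b \in \psi(\U)$. Since $T$ is NIP, \cref{fact:forking=dividing} gives that non-dividing coincides with non-forking over $M$, so every instance $\phi(x,b)$ with $b \in \psi(\U)$ is non-forking over $M$.

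Second, I would apply \cref{thm:the main theorem} pointwise: for each such $b$ it produces a formula $\chi_b(y) \in \tp(b/M)$ such that $\set{\phi(x,c)}{\U \vDash \chi_b(c)}$ is consistent. Since every $b \in \psi(\U)$ automatically realizes $\chi_b$, the partial type over $M$ given by $\Sigma(y) := \{\psi(y)\} \cup \set{\neg \chi_b(y)}{b \in \psi(\U)}$ has no realization in $\U$: any witness $b^* \in \psi(\U)$ would contradict $\chi_{b^*} \in \tp(b^*/M)$.

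Finally, compactness yields $b_0, \ldots, b_{n-1} \in \psi(\U)$ with $\psi(y) \vdash \bigvee_{i<n} \chi_{b_i}(y)$. Setting $\psi_i(y) := \psi(y) \wedge \chi_{b_i}(y)$ produces formulas over $M$ whose disjunction is equivalent to $\psi$, and each $\set{\phi(x,b)}{b \in \psi_i(M)}$ is consistent as a subset of the consistent set $\set{\phi(x,c)}{c \in \chi_{b_i}(\U)}$. There is no genuine obstacle here; the only fine points are that the $\psi_i$ lie over $M$ (automatic) and that the disjunction is \emph{equivalent} to $\psi$ rather than only implied by it, which is why we conjoin each $\chi_{b_i}$ with $\psi$.
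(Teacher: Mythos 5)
Your proposal is correct and takes essentially the same route as the paper: invoke \cref{lem:equivalence of non dividing formulas and pq} to turn the $(p,q)$-property into non-dividing (hence non-forking, since $T$ is NIP), apply \cref{thm:the main theorem} to each $b \in \psi(\U)$ to get a formula in $\tp(b/M)$ with consistent $\phi$-instances, and close by compactness. The only thing you spell out more carefully than the paper is the compactness step (the partial type $\Sigma$ and conjoining with $\psi$ to get genuine equivalence rather than mere implication), which is a harmless elaboration.
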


\begin{proof}
  By \cref{lem:equivalence of non dividing formulas and pq}, for every $b \in \psi(\U)$, $\phi(x,b)$ does not fork over $M$. \cref{thm:the main theorem} implies that for every $b\in \psi(\U)$ there is a formula $\psi_b(y) \in \tp(b/M)$ such that $\set{\phi(x,b)}{b\in \psi_b(M)}$ is consistent. By compactness, finitely many such formulas cover all of $\psi(y)$ as required. 
\end{proof}

\section{Uniformity} \label{sec:uniformity}
Now we want to give a uniform version of \cref{cor:definable p q explanation}.

\begin{theorem} [\cref{thm:uniform definable pq intro}] \label{thm:uniform definable pq}
  Suppose that $T$ is NIP, and that $\phi'(x,y,z)$, $\psi'(y,z)$ are two formulas without parameters.  Then for any $q \leq p <\omega$ there is $n<\omega$ and formulas $\psi_0(y,w),\dots,\psi_{n-1}(y,w)$ such that the following hold.
  
  Suppose that $M\vDash T$ and $c \in M^z$. Let $\phi(x,y) = \phi'(x,y,c)$ and $\psi(y) = \psi'(y,c)$. If $(\phi,\psi)$ has the $(p,q)$-property and $\vc^*(\phi(x,y)) < q$ then for some $d_0,\dots,d_{n-1} \in M^w$, $\psi(y)$ is equivalent to the disjunction $\bigvee_{i<n}\psi_i(y,d_i)$ and for each $i<n$, the set $\set{\phi(x,b)}{b\in \psi_i(M,d_i)}$ is consistent. 
\end{theorem}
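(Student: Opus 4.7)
The plan is to deduce \cref{thm:uniform definable pq} from \cref{cor:definable p q explanation} by a compactness argument in a monster model. Fix a monster $\U\vDash T$. By saturation, consistency of $\set{\phi'(x,b,c)}{b\in\chi(\U,d)}$ in $\U$ is equivalent to the single $\L$-formula $\exists x\,\forall b\,(\chi(b,d) \to \phi'(x,b,c))$, so for any fixed tuple of formulas $\sigma = (\chi_{k_1},\dots,\chi_{k_n})$, the conclusion of \cref{thm:uniform definable pq} for $c$ with respect to $\sigma$ is captured by a single $\L$-formula $\theta_\sigma(c)$:
\[\exists d_1\cdots d_n\,\exists x_1\cdots x_n\,\Big[\forall y\Big(\psi'(y,c) \leftrightarrow \bigvee_{i} \chi_{k_i}(y,d_i)\Big) \land \bigwedge_i \forall b\,(\chi_{k_i}(b,d_i) \to \phi'(x_i,b,c))\Big].\]
Since $T$ is complete, proving the sentence ``$\forall c$, if the hypotheses of \cref{thm:uniform definable pq} hold then $\theta_\sigma(c)$ holds'' in $\U$ transfers to any $M\vDash T$ and yields \cref{thm:uniform definable pq} for $(M,c)$.

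Enumerate all $\L$-formulas $\chi(y,w)$ (for various parameter-variable tuples $w$) as $(\chi_k(y,w_k))_{k<\omega}$, normalized so that (i) each $\chi_k$ implies $\psi'(y,z)$ (absorbing $z$ into $w_k$), and (ii) for each $\chi_k$ there exist parameters making $\chi_k(y,\cdot)$ identically false --- e.g., by conjoining with $v_1=v_2$ in fresh variables from a sort of size $\geq 2$. A \emph{strategy} is a finite tuple $\sigma=(\chi_{k_1},\dots,\chi_{k_n})$; call $c$ \emph{covered by $\sigma$} if $\U\vDash \theta_\sigma(c)$. Applied to $(\U,c)$, \cref{cor:definable p q explanation} says every $c$ satisfying the hypothesis is covered by some strategy.

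Suppose for contradiction that no single strategy works: for each strategy $\sigma$ fix $c_\sigma\in\U^z$ satisfying the hypothesis but not covered by $\sigma$. Let $\Sigma(c)$ be the partial $\L$-type consisting of the hypothesis formulas on $c$ together with $\neg\theta_\sigma(c)$ as $\sigma$ ranges over all strategies. Finite satisfiability of $\Sigma$ is verified as follows: given $\sigma_1,\dots,\sigma_r$, set $\sigma^+=\sigma_1\concat\cdots\concat\sigma_r$; the assumed $c_{\sigma^+}$ is not covered by $\sigma^+$, and if it were covered by some sub-strategy $\sigma_i$ we could extend that cover to one by $\sigma^+$ by reusing the $\sigma_i$-parameters and choosing parameters making each additional $\chi_{k_j}(y,\cdot)$ identically false --- such instances are vacuously consistent and contribute nothing to the disjunction (normalization (i) keeps every piece inside $\psi'(\cdot,c_{\sigma^+})$ and normalization (ii) produces the required emptiness). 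So $c_{\sigma^+}$ defeats every $\sigma_i$. By compactness, $\Sigma$ is realized by some $c^*$ in an extension $\U^*\succeq\U$; then $c^*$ satisfies the hypothesis but is covered by no strategy, contradicting \cref{cor:definable p q explanation} applied to $(\U^*,c^*)$, which produces a cover by $\L(\U^*)$-formulas --- hence by instances of our enumeration, forming a strategy.

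The main obstacle is the bookkeeping in finite satisfiability, specifically the normalizations (i) and (ii) engineered precisely so that a cover by a sub-strategy extends to a cover by the concatenated strategy without disturbing either the boolean equivalence $\psi'(y,c)\leftrightarrow\bigvee\chi_{k_i}(y,d_i)$ or the consistency of the individual pieces.
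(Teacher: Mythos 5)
Your argument hinges on the claim that consistency of $\set{\phi'(x,b,c)}{b\in\chi(\U,d)}$ is, by saturation, equivalent to $\U\vDash\exists x\,\forall b\,(\chi(b,d) \to \phi'(x,b,c))$. This is false. Consistency of that partial type means finite satisfiability, i.e., realization in some elementary extension; the displayed sentence demands an element of $\U$ satisfying $\forall b\,(\chi(b,d) \to \phi'(x,b,c))$, which is strictly stronger. Saturation does not bridge the gap because the type's parameters range over the definable set $\chi(\U,d)$, which can have full size $|\U|$; it is not a type over a small set. Concretely, in DLO take $\chi(y,d)=(y=y)$ and $\phi'(x,y)=(x>y)$: the set $\set{x>b}{b\in\U}$ is consistent, yet $\U\not\vDash\exists x\,\forall y\,(x>y)$. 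Consequently $\theta_\sigma(c)$ does not capture the conclusion of the theorem, and --- more damagingly for your argument --- \cref{cor:definable p q explanation} does \emph{not} imply that every $c$ satisfying the hypothesis is ``covered'' in your sense: the corollary gives consistency of each $\set{\phi(x,b)}{b\in\psi_i(\U,d_i)}$, not the first-order statement $\theta_\sigma(c)$. So the final contradiction (``$c^*$ is covered by no strategy, contradicting the corollary'') never materializes.

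This is exactly the obstacle the paper's proof flags explicitly (``being consistent is not first-order expressible, so we have to be a bit more careful''). The fix there is not to encode the \emph{conclusion} of \cref{cor:definable p q explanation} as a formula, but rather to encode the first-order \emph{hypotheses} of \cref{prop:pq + implication => consistent} extracted from the proof of \cref{thm:the main theorem} --- the $(p_i,q_i)$-property and bounded dual VC-dimension of auxiliary pairs $(\theta_i,\zeta_i)$, and the existence of $c$ witnessing the implication $\forall x(\theta_i(x,c)\to\phi(x,b))$ for each set $B$ of size at most $n_i$ --- all of which are single formulas in the parameters and hence transfer by \L{}o\'s' theorem, and then to re-derive consistency inside the target model by re-running \cref{prop:pq + implication => consistent} there. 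Your scaffolding (diagonalizing over finite tuples of candidate formulas, absorbing parameters, allowing vacuous disjuncts to extend a sub-strategy) parallels the paper's coding tricks and ultraproduct argument, but the proof cannot close until the false first-order encoding of consistency is replaced by such an indirect route.
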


\begin{remark} \label{rem:example where naive nf uniform version doesnt work}
  Based on \cref{lem:equivalence of non dividing formulas and pq}, one could ask if the following holds: 

  Suppose that $T$ is NIP, and that $\phi(x,y,z)$, $\psi(y,z)$ are two formulas (without parameters). Then there is $n<\omega$ and formulas $\psi_0(y,w_0),\dots,\psi_{n-1}(y,w_{n-1})$ such that for any $M\vDash T$ and any $c \in M^z$, if $\phi(x,b,c)$ does not fork over $M$ for any $b \in \psi(M,c)$ then for some $d_0,\dots,d_{n-1} \in M$, $\psi(y,c)$ is equivalent to the disjunction $\bigvee_{i<n}\psi_i(y,d_i)$ and for each $i<n$, the set $\set{\phi(x,b)}{b\in \psi_i(M,d_i)}$ is consistent. 

  However, consider the following example. Let $\mathcal{L} = \{E\}$ and let $T$ say that $E$ is an equivalence relation such that for every $n<\omega$ there is exactly one class with $n$ elements. Then $T$ is a complete stable theory. Let $\psi(y,z) = (y \mathrel{E} z)$ and $\phi(x,y,z) = (x=y)$. Suppose that $\psi_i$ for $i<n$ are as above. Fix some model $M$. Let $c$ be such that $n<|[c]_E|<\omega$. Then $(x=b)$ does not fork over $M$ for any $b \mathrel{E} c$. By the choice of $\psi_i$ for $i<n$, there are $d_0,\dots,d_{n-1}$ as above. But then there must be some $i<n$ such that $2\leq |\psi_i(M,d_i)|$ and $\set{x=b}{b\in \psi_i(M,d_i)}$ is inconsistent. 
\end{remark}

\begin{remark}\label{rem:pseudofinite case}
  Before giving the proof, consider a special case, where $T$ is pseudofinite. In that case, the formulas $\psi_i$ can be taken to be of the form $\phiopp(y,w) \land \psi(y)$ and $n = \Npq(p,q)$. The reason is that given any $M,c$ as above, there are $d_i \in M^x$ for $i<n$ such that for all $b \in \psi(M,c)$, $\phi(d_i,b)$ holds for some $i<n$. Indeed, if not, then one can formulate the negation in a first-order way, expressing the fact that there is a set system of nonempty sets of dual VC-dimension strictly bounded by $q$ satisfying the $(p,q)$-property, without $n$ elements such that any set contains one of those elements, but then this must be true in a finite structure, contradicting \cref{fact:pq theorem}.
\end{remark}

\begin{proof}[Proof of \cref{thm:uniform definable pq}]
  First note that we may assume that all models of $T$ are infinite as otherwise we would be done by \cref{rem:pseudofinite case}. 
  
  By the usual coding tricks (see e.g., \cite[Theorem II.2.12(1)]{Sh-CT} and \cite[Lemma 2.5]{MR2963018}), it is enough to find formulas $\psi_i(y,w)$ for $i<n$ as in the statement such that for every $M,c$ some subset of these formulas satisfies the statement of the theorem. Indeed, if we find such formulas, then we can replace each $\psi_i$ by $\psi_i'(y,w,s,t_0,\dots,t_{n-1}):= ((s=t_i) \land \psi_i)$. Since all our models have size at least $2$, it is easy to see that the new formulas $\sequence{\psi_i'}{i<n}$ satisfy the requirements. 
  
  Suppose this is not true and fix $\phi(x,y,z),\psi(y,z),q$ and $p$ as above. For every choice of formulas $\Psi := \{\psi_0(y,w),\dots,\psi_{n-1}(y,w)\}$ (with $w$ being any tuple of variables) there is a model $M_{\Psi}$ and $c_{\Psi} \in M_{\Psi}^z$ witnessing that the conclusion does not hold: the pair $(\phi(x,y,c_\Psi),\psi(y,c_\Psi))$ has the $(p,q)$-property and $\vc^*(\phi(x,y,c_\Psi))<q$, but for all $s \subseteq n$ and $\sequence{d_i}{i\in s}$ from $M_\Psi^w$, either $\psi(y,c)$ is not equivalent to the disjunction $\bigvee_{i\in s}\psi_i(y,d_i)$ or for some $i\in s$, the set $\set{\phi(x,b)}{b\in \psi_i(M,d_i)}$ is inconsistent.

  Let $\mathcal{U}$ be an ultrafilter on the family $X$ of finite sets of formulas of the form $\xi(y,w)$ (for varying $w$) extending the filter generated by $\set{\Psi' \in X}{\Psi \subseteq \Psi'}$ for any $\Psi \in X$. Let $M^*$ be the ultraproduct $\prod_{\Psi \in X} M_{\Psi} / \mathcal{U}$. Finally, let $c^* \in M^*$ be the class of $\sequence{c_\Psi}{\Psi \in X}$. 

  Note that $(\phi(x,y,c^*), \psi(y,c^*))$ has the $(p,q)$-property and $\vc^*(\phi(x,y,c^*))<q$ since this is expressible in first-order. By \cref{lem:equivalence of non dividing formulas and pq}, it follows that for any $b \in \psi(\U,c^*)$, $\phi(x,b,c^*)$ does not fork over $M^*$. Now we want to apply \cref{cor:definable p q explanation}, but being consistent is not first-order expressible, so we have to be a bit more careful.
  
  By compactness and the proof of \cref{thm:the main theorem}, there are finitely many formulas $\psi_i(y,w),\theta_i(x,u,v),\zeta_i(u,v)$ without parameters, and $d_i \in (M^*)^w,e_i \in (M^*)^v$ for $i<n$ such that:
  \begin{enumerate}
    \item $\psi(y,c^*)$ is equivalent to the disjunction $\bigvee \psi_i(y,d_i)$.
    \item For each $i<n$, $(\theta_i(x,u,e_i),\zeta_i(u,e_i))$ has the $(p_i,q_i)$-property for some $\vc^*(\theta_i(x,u,e_i)) < q_i \leq p_i$ and let $n_i = \Npq(p_i,q_i)$
    \item For every set $B \subseteq \psi_i(M^*,d_i)$ of size $\leq n_i$ there is some $c \in \zeta_i(M^*,e_i)$ such that $M^* \vDash \forall x (\theta_i(x,c,e_i) \to \phi(x,b,c^*))$ for all $b\in B$.
  \end{enumerate}
  
  Let $\Psi = \{\psi_0,\dots,\psi_{n-1}\}$. By  \L{}o\'s' theorem, there is some $\Psi' \supseteq \Psi$ such that in $M_\Psi'$, there are $d_i',e_i'$ for which the above is true replacing $c^*$ by $c_{\Psi'}$. But then by applying \cref{prop:pq + implication => consistent} in $M_{\Psi'}$ we get that $\set{\phi(x,b,c_{\Psi'})}{b\in \psi_i(M_{\Psi'},d_i')}$ is consistent for all $i<n$, contradicting the choice of $M_{\Psi'}$.
\end{proof}

\subsection{A uniform version related to VC-density} \label{subsec:VC-codensity uniformity}
Matou\v sek's original formulation of \cref{fact:pq theorem} was in terms of VC-density and not in terms of (the less refined notion of) VC-dimension. We will explain this notion and prove a variant of \cref{thm:uniform definable pq} related to this notion.

Let $(X,\calF)$ be a set system. For any $n<\omega$, let $\pi_{\calF}(n) = \max\set{|\calF \cap A|}{A\subseteq X, |A| \leq n}$ where $\calF \cap A = \set{F\cap A}{F\in \calF}$. Similarly, define $\pi_\calF^*$ as $\pi_{\calF^*}$ (see \cref{fact:dual}).

\begin{definition}[VC-density]
Suppose that $(X,\calF)$ is a set system. The \defn{VC-density} of $\calF$, denoted by $\vcd(\calF)$, is $\limsup_{n\to \infty} \frac{\log \pi_\calF(n)}{\log n} \in \Rr \cup \{\infty\}$. 
The \defn{VC-codnsity} of $\calF$, denoted by $\vcd^*(\calF)$, is defined as $\vcd(\calF^*)$ (see \cref{fact:dual}).
\end{definition}

In other words, $\vcd(\calF)$ is the infimum of all non-negative real number $r$ such that $\pi_\calF(n)=O(n^r)$.

\begin{fact} 
  By the Sauer-Shelah lemma (see e.g., \cite[Lemma 6.4]{simon-NIP}), either $\pi_\calF(n) = 2^n$ for all $n<\omega$, which happens exactly when $\calF$ is not a VC-class, or $\pi_\calF(n) = O(n^{\vc(\calF)})$. Thus, $\vcd(\calF)\leq \vc(\calF)$.  
\end{fact}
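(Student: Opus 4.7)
The plan is to apply the Sauer--Shelah lemma directly; the content of the fact is essentially to convert a VC-dimension bound into a polynomial bound on the shatter function and then take logarithms. First I would recall the precise statement of Sauer--Shelah: if $\calF \subseteq \SS(X)$ is a VC-class with $\vc(\calF) = d < \infty$, then for every finite $A \subseteq X$ with $|A| = n$, $|\calF \cap A| \leq \sum_{i=0}^{d}\binom{n}{i}$, and hence $\pi_\calF(n) \leq \sum_{i=0}^{d}\binom{n}{i}$.

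For the dichotomy, I would observe that $\calF$ fails to be a VC-class precisely when arbitrarily large subsets of $X$ are shattered; since a set $A$ is shattered by $\calF$ if and only if $|\calF \cap A| = 2^{|A|}$, this is equivalent to $\pi_\calF(n) = 2^n$ for every $n<\omega$. Otherwise, $\calF$ is a VC-class with $d := \vc(\calF) < \infty$, and the Sauer--Shelah bound gives $\pi_\calF(n) \leq \sum_{i=0}^{d}\binom{n}{i} \leq (d+1) n^d$ for $n \geq 1$, so $\pi_\calF(n) = O(n^{\vc(\calF)})$. The two cases are clearly mutually exclusive and exhaustive.

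Finally, for the inequality $\vcd(\calF) \leq \vc(\calF)$: in the VC-class case, the polynomial bound $\pi_\calF(n) \leq C n^d$ (with $d = \vc(\calF)$) yields $\frac{\log \pi_\calF(n)}{\log n} \leq d + \frac{\log C}{\log n}$, whose $\limsup$ as $n \to \infty$ is at most $d$, so $\vcd(\calF) \leq d = \vc(\calF)$. In the non-VC-class case $\vc(\calF) = \infty$ by convention and the inequality is trivial. The only genuinely non-trivial ingredient is Sauer--Shelah itself (which requires a shifting or induction argument on $n$ and $d$); since the paper cites it as \cite[Lemma 6.4]{simon-NIP}, this can be taken as a black box and there is no real obstacle.
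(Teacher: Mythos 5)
Your proof is correct and follows exactly the route the paper intends: the fact is stated as a direct consequence of the Sauer--Shelah lemma (cited as a black box), and your argument simply unwinds that citation — the dichotomy from shattering, the polynomial bound $\sum_{i\le d}\binom{n}{i}\leq (d+1)n^d$, and the elementary $\limsup$ computation of $\vcd$. Nothing is missing.
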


Here is Matou\v sek's formulation of \cref{fact:pq theorem}:
\begin{fact} \cite[Theorem 4]{Matousek} \label{fact:Matousek original}
  Let $(X,\calF)$ be a set system and suppose that $\calF$ is a VC-class such that $\vcd^*(\calF)<q\leq p$ for some $q,p<\omega$, or even just $\pi^*_\calF(n) = o(n^q)$. Then there is some number $N$ such that for any finite $\mathcal{G} \subseteq \calF$, if $\mathcal{G}$ has the $(p,q)$-property, then for some $X_0 \subseteq X$ of size $N$, $X_0 \cap F \neq \emptyset$ for any nonempty $F \in \mathcal{G}$. 
\end{fact}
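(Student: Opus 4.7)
The plan is to reproduce Matou\v sek's original two-step argument: first establish a \emph{fractional Helly} phenomenon for set systems of bounded dual VC-density, and then combine it with a weak $\epsilon$-net theorem for VC-classes to produce a transversal of bounded size.

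First I would prove fractional Helly in the following form: if $\calF$ is a VC-class with $\pi_\calF^*(n) = o(n^q)$, then there exist $\alpha, \beta > 0$ such that for any finite $\mathcal{G} \subseteq \calF$, if at least an $\alpha$-fraction of the $q$-element subfamilies of $\mathcal{G}$ have nonempty intersection, then some point $x \in X$ lies in at least a $\beta$-fraction of the members of $\mathcal{G}$. The argument is a double counting on pairs $(T, x)$ with $T$ a $q$-element subfamily of $\mathcal{G}$ and $x \in \bigcap T$: the hypothesis $\pi_\calF^*(n) = o(n^q)$ caps the number of distinct ``trace patterns'' that $\mathcal{G}$ can realize on a finite point set to something subpolynomial of degree $q$, and pigeonholing on traces then forces some point to lie in the common intersection of many of these $q$-tuples.

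Next, I would show that the $(p,q)$-property supplies the hypothesis of fractional Helly. For any finite $\mathcal{G} \subseteq \calF$ with the $(p,q)$-property, every $p$-element subfamily contains a $q$-element subfamily with nonempty intersection, so a simple double-counting (each $q$-tuple is contained in $\binom{|\mathcal{G}|-q}{p-q}$ many $p$-subfamilies) yields that a fraction at least $\binom{p}{q}^{-1}$ of the $q$-subfamilies of $\mathcal{G}$ already have nonempty intersection. For $p \geq q$ large this exceeds the threshold $\alpha$ from fractional Helly, producing a point $x_0$ piercing a $\beta$-fraction of $\mathcal{G}$.

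Finally, to turn the ``one point pierces a fraction $\beta$'' conclusion into a single bounded transversal, I would invoke the weak $\epsilon$-net theorem for VC-classes: any finite subfamily of a VC-class admits a weak $\epsilon$-net of size depending only on $\epsilon$ and $\vc(\calF)$. Applied to the dual of $\mathcal{G}$ with $\epsilon = \beta$, fractional Helly guarantees that every subfamily of relative density exceeding $\beta$ is pierced by a single point, so a weak $\beta$-net of the dual (equivalently, a hitting set for all ``heavy'' sub-systems) gives a transversal of size bounded by $N = N(p,q,\calF)$. The main obstacle is Step~1: the delicate part is turning the asymptotic hypothesis $\pi_\calF^*(n) = o(n^q)$ into an effective multilinear pigeonhole on $q$-tuples, as opposed to working under the cleaner but stronger inequality $\vcd^*(\calF) < q$, and keeping track of how the constants $\alpha, \beta$ depend on the rate of growth of $\pi_\calF^*$.
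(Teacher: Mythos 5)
This statement is not proved in the paper: it is \cref{fact:Matousek original}, a black-box citation of \cite[Theorem 4]{Matousek}, so there is no "paper's own proof" to compare against. Your proposal is an attempt to reconstruct Matou\v sek's argument, and its overall skeleton — fractional Helly for bounded dual shatter function, then the Alon--Kleitman machinery combining fractional Helly with $\epsilon$-nets — is the right one. Two steps as written, however, have genuine gaps.

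In Step~1, the double-counting you describe does not yield a linear conclusion. Each intersecting $q$-tuple $T$ contributes a witness point $x_T$, and the trace of $x_T$ on $\mathcal{G}$ is one of at most $\pi^*_\calF(n)$ possibilities, so pigeonholing gives a single trace $\tau$ with $\binom{|\tau|}{q} \geq \alpha \binom{n}{q}/\pi^*_\calF(n)$. When $\pi^*_\calF(n) = o(n^q)$ this forces $|\tau|\to\infty$, and when $\vcd^*(\calF)<q$ it forces $|\tau| = \Omega(n^{\delta})$ for some $\delta>0$, but in neither case does it give $|\tau|\geq\beta n$ — the conclusion you need for fractional Helly. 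Matou\v sek's proof requires a genuine bootstrapping/supersaturation argument beyond the first pigeonhole, and you flag this as "the delicate part" without actually supplying it. In Step~3 you also elide the LP-duality step: the standard route is to first bound the \emph{fractional} transversal number $\tau^*(\mathcal{G})$ by a constant via LP duality together with fractional Helly, and only then invoke the $\epsilon$-net theorem (with $\epsilon = 1/\tau^*$, against the measure coming from an optimal fractional transversal) to get a transversal of size $O(\tau^*\log\tau^*)$. Saying "a weak $\beta$-net of the dual gives a transversal" does not correctly describe this chain. Both issues are fixable, but they are exactly the nontrivial content of Matou\v sek's theorem, not routine bookkeeping.
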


\begin{remark}
  To see why this formulation implies the one found in \cref{fact:pq theorem}, see \cite[Remark 7]{CS-extDef2}. 
\end{remark}

In the context of a complete $\mathcal{L}$-theory $T$, 
for a formula $\phi(x,y)$ define $\vcd(\phi)$ as $\vcd(\calF)$, 
where $\calF = \set{\phi(M,a)}{a\in M^y}$ for some/any model $M \vDash T$ 
(even though the VC-density of $\phi$ is not determined by a sentence, it is determined by a partial type describing $\pi_{\calF}(n)$ for each $n<\omega$). 
Similarly, define $\vcd^*(\phi)$ as the VC-codensity of $\calF$, i.e., $\vcd^*(\phi)=\vcd(\phi^{\opp})$.
In addition, let $\pi_\phi = \pi_\calF$ and $\pi_\phi^* = \pi^*_{\calF}$ ($=\pi_{\phiopp}$).

There are many theories where the VC-density of formulas have been computed, see \cite{VC-density1,VC-density2}. For example, \cite[Theorem 1.1]{VC-density1} states that in weakly-o-minimal theories, the VC-density of a formula $\phi(x,y)$ is bounded by $|y|$.

\begin{remark}
  A naive attempt at generalizing \cref{thm:uniform definable pq}, is the following statement:

  Suppose that $T$ is NIP, and $\phi'(x,y,z)$, $\psi'(y,z)$ are two formulas without parameters.  Then for any $q \leq p <\omega$ there is $n<\omega$ and formulas $\psi_0(y,w),\dots,\psi_{n-1}(y,w)$ such that the following hold.
  
  Suppose that $M\vDash T$ and $c \in M^z$. Let $\phi(x,y) = \phi'(x,y,c)$ and $\psi(y) = \psi'(y,c)$. If $(\phi,\psi)$ has the $(p,q)$-property and $\vcd^*(\phi(x,y)) < q$ then for some $d_0,\dots,d_{n-1} \in M^w$, $\psi(y)$ is equivalent to the disjunction $\bigvee_{i<n}\psi_i(y,d_i)$ and for each $i<n$, the set $\set{\phi(x,b)}{b\in \psi_i(M,d_i)}$ is consistent. 

  However, this is false. Indeed, let $T$ be as in \cref{rem:example where naive nf uniform version doesnt work}. Let $\psi'(y,z) = (y \mathrel{E} z)$ and $\phi'(x,y,z) = (x=y) \land \psi'(y,z)$. Let $q=p=1$. Suppose that $\psi_i(y,w)$ for $i<n$ are as in the statement. Let $M \vDash T$, let $c$ be such that $[c]_E$ is finite and of size $>n$ and let $\phi,\psi$ be as above. Then $\vcd^*(\phi(x,y))=0$ since $\set{\phi(M,a)}{a\in M}$ is finite. Clearly, the $(1,1)$-property holds for $(\phi,\psi)$. Thus, there are $d_i$ for $i<n$ as above, and we get the same contradiction as in \cref{rem:example where naive nf uniform version doesnt work}.
\end{remark}

A more sensible version is the following, which we will deduce from \cref{thm:uniform definable pq}.
\begin{corollary}\label{cor:Pablo's uniformity}
  Suppose that $T$ is NIP, and that $\phi(x,y)$, $\psi'(y,z)$ are two formulas without parameters. 
  Additionally, assume that $\vcd^*(\phi)< q \leq p < \omega$, or even just that $\pi^*_\phi(n)=o(n^q)$.
  Then there is $n<\omega$ and formulas $\psi_0(y,w),\dots,\psi_{n-1}(y,w)$ such that the following hold.
  
  Suppose that $M\vDash T$ and $c \in M^z$. Let $\psi(y) = \psi'(y,c)$. If $(\phi,\psi)$ has the $(p,q)$-property then for some $d_0,\dots,d_{n-1} \in M^w$, $\psi(y)$ is equivalent to the disjunction $\bigvee_{i<n}\psi_i(y,d_i)$ and for each $i<n$, the set $\set{\phi(x,b)}{b\in \psi_i(M,d_i)}$ is consistent. 
\end{corollary}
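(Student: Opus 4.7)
The plan is to follow the proof of \cref{thm:uniform definable pq} verbatim, substituting at its single use of the VC-dimension bound a density-based variant of the direction $(3)\Rightarrow(1)$ of \cref{lem:equivalence of non dividing formulas and pq}.

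The variant to prove first is: if $\phi(x,y)$ is NIP with $\pi^*_\phi(n)=o(n^q)$, $M\vDash T$, $\psi(y)$ is over $M$, and $(\phi,\psi)$ has the $(p,q)$-property, then $\phi(x,b)$ does not fork over $M$ for every $b\in\psi(\U)$. The strategy is by contradiction, using pigeonhole. Assuming dividing (equivalent to forking over a model by \cref{fact:forking=dividing}), extract an $M$-indiscernible sequence $\sequence{b_i}{i<\omega}$ starting at $b$ with $\set{\phi(x,b_i)}{i<\omega}$ $k$-inconsistent for some finite $k$ (available from NIP); then $b_i\in\psi(\U)$ for all $i$, and the sets $\phi(\U,b_i)$ may be assumed pairwise distinct (otherwise indiscernibility forces them all equal and the type is consistent). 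Apply \cref{fact:Matousek original} to the subfamily $\set{\phi(\U,b_i)}{i<L}$ for $L\geq kN$, where $N$ is the Matou\v sek constant obtained from $\pi^*_\phi$, $p$, and $q$. This gives $a_0,\dots,a_{N-1}\in\U^x$ meeting every $\phi(\U,b_i)$, so pigeonhole supplies some $a_j$ lying in $k$ of these sets, contradicting $k$-inconsistency.

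Once the variant is available, the proof of \cref{cor:Pablo's uniformity} is a direct rerun of the proof of \cref{thm:uniform definable pq}: assume no finite $\Psi$ works, collect counterexamples $(M_\Psi,c_\Psi)$, and build the ultrafilter $\mathcal{U}$ and ultrapower $M^*=\prod_\Psi M_\Psi/\mathcal{U}$ with $c^*=[(c_\Psi)]$. Because $\phi$ has no parameters, $\pi^*_\phi(n)=o(n^q)$ is determined by the complete theory $T$ and hence holds in $M^*$, while $(\phi,\psi'(y,c^*))$ has the $(p,q)$-property by \L{}o\'s' theorem. The variant above then replaces the single invocation of \cref{lem:equivalence of non dividing formulas and pq} in the proof of \cref{thm:uniform definable pq} to yield non-forking of $\phi(x,b)$ over $M^*$ for every $b\in\psi'(\U,c^*)$; the rest of that proof (compactness with \cref{thm:the main theorem}, then \L{}o\'s' theorem back into some $M_{\Psi'}$, then \cref{prop:pq + implication => consistent}) carries over unchanged.

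The hard part is the variant: the alternation rank bound $\vc^*(\phi)+1$ underlying \cref{lem:equivalence of non dividing formulas and pq} is too coarse once $\vc^*(\phi)\geq q$, even when $\vcd^*(\phi)<q$. Combining Matou\v sek's quantitative covering number $N$ from \cref{fact:Matousek original} with the merely finite $k$-inconsistency from NIP via pigeonhole is exactly what is needed to bridge this gap; everything else is inherited directly from \cref{thm:uniform definable pq}.
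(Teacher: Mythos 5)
Your proof is correct, and both it and the paper rest on the same combinatorial core: combining Matou\v sek's transversal number $N$ from \cref{fact:Matousek original} with the pigeonhole principle to compensate for the fact that the alternation-rank bound $\vc^*(\phi)+1$ may be larger than $q$. What differs is the structure. The paper extracts this into a purely combinatorial lemma (\cref{lem:pq for some q>vc*}) which upgrades the $(p,q)$-property to a $(p',q')$-property for $q' := \vc^*(\phi)+1$, and then invokes \cref{thm:uniform definable pq} as a black box with the new pair $(p',q')$; no re-examination of that theorem's proof is needed. You instead prove the model-theoretic statement that the $(p,q)$-property together with $\pi^*_\phi(n)=o(n^q)$ implies non-forking of every $\phi(x,b)$, $b\in\psi(\U)$ over $M$, and then rerun the ultraproduct argument of \cref{thm:uniform definable pq} with your variant substituted for the single use of \cref{lem:equivalence of non dividing formulas and pq}. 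Your observation that $\pi^*_\phi(n)=o(n^q)$ transfers to the ultraproduct because $\phi$ is parameter-free and $\pi^*_\phi$ is determined by $T$ is the key point making the rerun go through, and it is exactly the reason the statement requires $\phi$ (unlike $\psi'$) to have no $z$-variable. The paper's route is shorter and more modular; it also records your variant as a remark following \cref{lem:pq for some q>vc*}, noting that the lemma's proof improves item (3) of \cref{lem:equivalence of non dividing formulas and pq} by replacing $\vc^*(\phi)$ with $\vcd^*(\phi)$. One small point: in your variant the $k$-inconsistency of the indiscernible witness already follows from the definition of dividing; NIP is not needed for that step (it only gives an explicit bound $k\le 2\vc^*(\phi)+2$, which your pigeonhole argument does not use).
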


\begin{remark}
  \cref{cor:Pablo's uniformity} answers positively a question of And\'ujar-Guerrero \cite[Questions 3.7(2)]{Guerrero} where $T$ is assumed to be NIP, without restrictions on the VC-codensity.
\end{remark}

To prove this we will need the following observation, which was pointed out to us by Pablo And\'ujar-Guerrero. It is an improvement of \cref{lem:equivalence of non dividing formulas and pq}. 

\begin{lemma} \label{lem:pq for some q>vc*}
  Suppose that $(X,\calF)$ is a set system. Suppose that  $\pi^*_\calF(n) = o(n^q)$ and $q\leq p <\omega$. Then for any $0< q'<\omega$ there is $p'<\omega$ such that for any finite $\mathcal{G} \subseteq \mathcal{F}$, if $\mathcal{G}$ has the $(p,q)$-property then $\mathcal{G}$ has the $(p',q')$-property.
\end{lemma}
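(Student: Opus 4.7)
The plan is to combine a standard double counting using the $(p,q)$-property with the dual shatter function bound coming from $\pi_\calF^*(n)=o(n^q)$.

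First, I would fix a finite $\mathcal{G}\subseteq \calF$ with the $(p,q)$-property, say $\mathcal{G}=\{F_1,\dots,F_{p'}\}$, and count pairs $(S,P)$ where $S$ is a $q$-subset of $\{1,\dots,p'\}$ with $\bigcap_{i\in S}F_i\neq\emptyset$ (``good'' $q$-subsets) and $P$ is a $p$-subset of $\{1,\dots,p'\}$ containing $S$. By the $(p,q)$-property each $P$ contains at least one good $S$, while each $S$ lies in exactly $\binom{p'-q}{p-q}$ many $P$'s, so the number of good $q$-subsets is at least
\[
\binom{p'}{p}\Big/\binom{p'-q}{p-q}=\binom{p'}{q}\Big/\binom{p}{q}.
\]

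Next, for each good $q$-subset $S$ pick a witness $x_S\in\bigcap_{i\in S}F_i$ and associate to it its \emph{trace} $T(x_S):=\{i\le p':x_S\in F_i\}\subseteq\{1,\dots,p'\}$, which satisfies $S\subseteq T(x_S)$. The number of distinct traces that can occur, as $x$ ranges over $X$, is at most $\pi_\calF^*(p')$ by definition of the dual shatter function. By pigeonhole there is a single trace $T$ such that at least $\binom{p'}{q}/(\binom{p}{q}\pi_\calF^*(p'))$ good $q$-subsets $S$ satisfy $T(x_S)=T$, and all such $S$ are subsets of $T$. Hence
\[
\binom{|T|}{q}\ \ge\ \frac{\binom{p'}{q}}{\binom{p}{q}\,\pi_\calF^*(p')}.
\]

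Finally, since $\binom{p'}{q}=\Theta((p')^q)$ and $\pi_\calF^*(p')=o((p')^q)$, the right-hand side tends to infinity with $p'$. Thus I can choose $p'$ large enough in terms of $p,q,q'$ (and the asymptotic $\pi_\calF^*(n)=o(n^q)$) to force $\binom{|T|}{q}>\binom{q'-1}{q}$, which gives $|T|\ge q'$. The common witness $x_S$ for this $T$ lies in all $F_i$ with $i\in T$, so some $q'$ of the original $F_i$'s share a common point, as required.

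There is no real obstacle here beyond bookkeeping; the only mild subtlety is the little-$o$ choice of $p'$, which must be made after $q'$ is specified but before applying pigeonhole, and is the place where the hypothesis $\pi_\calF^*(n)=o(n^q)$ (rather than the weaker $\vcd^*(\calF)<q$ used merely to guarantee this asymptotic) is actually used.
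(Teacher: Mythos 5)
Your proof is correct, but it takes a genuinely different route from the paper's. The paper's argument is a one-liner: it invokes Matou\v sek's formulation of the $(p,q)$-theorem (\cref{fact:Matousek original}), which supplies a transversal $X_0$ of size $N$ hitting every nonempty member of any finite $\mathcal{G}$ with the $(p,q)$-property, and then sets $p' = N\cdot(q'-1)+2$: among any $p'$ members of $\mathcal{G}$ at most one is empty, the remaining $N(q'-1)+1$ each meet $X_0$, and pigeonhole produces a point of $X_0$ lying in $q'$ of them. Your argument instead avoids the transversal entirely and rebuilds what is needed from scratch: the double count over pairs $(S,P)$ is a fractional-Helly-type bound giving many good $q$-tuples, and pigeonhole over the at most $\pi^*_\calF(p')$ dual traces then forces a large trace $T$, whose witness lies in $q'$ of the $F_i$. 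This is a valid, self-contained combinatorial derivation (it is essentially reproving a lemma from the proof pipeline of the $(p,q)$-theorem); what the paper's route buys is brevity, by leaning on \cref{fact:Matousek original}, which is already stated for other purposes.

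One small correction to your closing parenthetical: you describe $\vcd^*(\calF)<q$ as ``the weaker'' hypothesis, but it is the \emph{stronger} one, since $\vcd^*(\calF)<q$ implies $\pi^*_\calF(n)=o(n^q)$ but not conversely (for instance $\pi^*_\calF(n)$ of order $n^q/\log n$ is $o(n^q)$ yet has VC-codensity exactly $q$). This does not affect the validity of the argument.
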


\begin{proof}
  Let $N$ be as in \cref{fact:Matousek original}. Given $q'<\omega$, let $p'=N\cdot (q'-1)+2$. The lemma follows by the choice of $N$ and the pigeonhole principle (it is $+2$ and not $+1$ to allow for the empty set to be in $\mathcal{G}$). 
\end{proof}

\begin{remark}
  The proof of \cref{lem:pq for some q>vc*} allows one to improve \cref{lem:equivalence of non dividing formulas and pq}: one can improve (2) by expressing $p$ as in the proof, and (3) can be improved by replacing $\vc^*(\phi)$ by $\vcd^*(\phi)$. 
\end{remark}

\begin{proof}[Proof of \cref{cor:Pablo's uniformity}]
   Let $q' = \vc^*(\phi)+1$ and let $p'$ be given by \cref{lem:pq for some q>vc*} for the family $\calF:=\set{\phi(M,a)}{a \in M^y}$. Now apply \cref{thm:uniform definable pq} with $\phi':= \phi$, $\psi'$, $q'$ and $p'$ to get $\psi_0(y,w),\dots,\psi_{n-1}(y,w)$ as in there. Then these formulas satisfy the desired conclusion: indeed, if $c \in M^z$ and $\psi(y):=\psi'(y,c)$ are such that $(\phi,\psi)$ have the $(p,q)$-property, then by the choice of $p'$, it has the $(p',q')$-property, and hence there are $d_0,\dots,d_{n-1}$ as required. 
\end{proof}

\section{Final thoughts and questions} \label{sec:final thoughts}

The following question remains open. 
\begin{question}
  In \cref{thm:the main theorem}, is it enough to assume that $\phi(x,y)$ is NIP instead of the whole theory?
\end{question}

A positive answer is conjectured in \cite[Conjecture 2.15]{simon-invariant}.

\begin{remark}
  Essentially, there are two places where the proof of \cref{thm:the main theorem} uses the fact that $T$ is NIP. The first one is the existence of strict non-forking global types which are coheirs (\cref{fact:non-coforking coheir}), but this holds assuming that $T$ is NTP$_2$ instead of NIP. The other place is in the very last part of the proof of \cref{prop:lemma as in Gareth Lotte} where we used that non-forking is the same as invariance over models (\cref{fac:forking in NIP}). However, this would still work if the implication formula $\forall x(\theta_m(x,z) \to \phi(x,y))$ was NIP. Thus, the theorem is still true provided that $T$ is NTP$_2$ and all formulas of the form  $\xi(z,y)=\forall x (\bigwedge_i \phi(x,z_i)^{\epsilon_i} \to \phi(x,y))$ are NIP. In fact, a close inspection of the proof yields a bound on the length of the tuple $z$ in terms of $\vc(\phi)$.  
\end{remark}

The following is natural in light of \cref{thm:uniform definable pq}:
\begin{problem}
  Suppose that $T$ is NIP, and fix two formulas $(\phi'(x,y,z),\psi'(y,z))$. Find an effective bound on the function mapping $(p,q)$ for $p \geq q$ to $n$ as in \cref{thm:uniform definable pq}, and explore how this function depends on $(\phi',\psi')$.
\end{problem}

Now we want to discuss when general theories satisfy the conclusion of \cref{thm:the main theorem}.

In the next definition, due to Adler\cite{Adler}, $\card$ denotes the class of all cardinals.
\begin{definition}
  We say that non-forking is  \defn{bounded} if there is a function $f : \card \to \card$ such that for any set $C$ and every $p\in S(C)$, the number of global non-forking extensions of $p$ is bounded by $f(|C|+|T|)$. 
\end{definition}

\begin{fact}\cite[Corollary 38]{Adler}
  Non-forking is bounded iff non-forking is equivalent to Lascar-non-splitting, that is: a global type $p$ does not fork over a model $M$ iff for any formula $\phi(x,y)$ and any $c,d$ such that $c\equiv_M d$, $\phi(x,c) \in p$ iff $\phi(x,d) \in p$. 
\end{fact}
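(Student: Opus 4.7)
The plan is to prove the two directions of the biconditional separately, each via a direct counting argument.

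For the easy direction ($\Leftarrow$), suppose that over any model $M$, non-forking coincides with $M$-invariance. Given a set $C$ and $p \in S(C)$, enlarge $C$ to a model $M$ of size $\leq |C|+|T|$. By monotonicity of non-forking in the base, any global extension $q \supseteq p$ that does not fork over $C$ also does not fork over $M$, hence is $M$-invariant by hypothesis. Such a $q$ is entirely determined by the data assigning, to each pair $(\phi(x,y), r)$ with $\phi$ an $\mathcal{L}$-formula and $r \in S^y(M)$, the truth value of $\phi(x,b) \in q$ for any (equivalently, every) $b \models r$. The number of such data is at most $2^{|\mathcal{L}| \cdot |S(M)|} \leq 2^{2^{|C|+|T|}}$, giving an explicit boundedness function $f$.

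For the hard direction ($\Rightarrow$), assume non-forking is bounded by some $f$, and suppose for contradiction that some $p \in S(\U)$ does not fork over a model $M$ yet fails to be $M$-invariant. Fix witnesses $\phi(x,y)$ and $c \equiv_M d$ with $\phi(x,c) \in p$ and $\neg \phi(x,d) \in p$, and fix a cardinal $\kappa > f(|M|+|T|)$. Using compactness together with Ramsey/Erd\H{o}s--Rado in the standard way, I produce an $M$-indiscernible sequence $(a_i)_{i<\kappa}$ with $(a_i,a_j) \equiv_M (c,d)$ for all $i<j$; writing $t_i := [\phi(x,a_i) \in p] \in \{0,1\}$, this ensures $t_0 = 1$ and $t_1 = 0$.

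By $M$-indiscernibility, every finitely-supported permutation $\pi$ of $\kappa$ lifts to some $\sigma_\pi \in \Aut(\U/M)$ sending $a_k \mapsto a_{\pi(k)}$, and the conjugate $\sigma_\pi(p)$ remains non-forking over $M$ since forking is $\Aut(\U/M)$-invariant. A direct check gives $\phi(x,a_i) \in \sigma_\pi(p) \iff t_{\pi^{-1}(i)} = 1$. Since $\kappa$ is infinite and $t$ takes both values, one of the level sets $\{j<\kappa : t_j = 0\}$, $\{j<\kappa : t_j = 1\}$ has size $\kappa$; say $S := \{j : t_j = 0\}$ does. For each $j \in S$ let $\pi_j$ be the transposition $0 \leftrightarrow j$. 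Then for distinct $j, j' \in S$, evaluating at the formula $\phi(x,a_j)$, the conjugate $\sigma_{\pi_j}(p)$ contains it (as $t_0 = 1$) while $\sigma_{\pi_{j'}}(p)$ does not (as $t_j = 0$ and $\pi_{j'}^{-1}(j) = j$). This produces $\kappa$ pairwise distinct global non-forking extensions of $p|_M$, contradicting $\kappa > f(|M|+|T|)$. The main obstacle is the indiscernible extraction realizing $(a_i,a_j) \equiv_M (c,d)$ uniformly; I would handle this by starting from a sufficiently long sequence of $(c,d)$-copies inside $\U$ (whose length depends on $\kappa$ via Erd\H{o}s--Rado) and thinning, after which the argument is routine automorphism bookkeeping.
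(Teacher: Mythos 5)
The easy direction ($\Leftarrow$) of your proposal is essentially correct: extend $C$ to a model $M$ of size $|C|+|T|$, note that a global type non-forking over $C$ is automatically non-forking over the larger set $M$, invoke the hypothesis to see it is $M$-invariant, and count $M$-invariant global types as subsets of $\bigsqcup_{\phi}S^y(M)$, giving the bound $2^{2^{|C|+|T|}}$. (The phrase ``monotonicity of non-forking in the base'' is a slightly non-standard name for the fact that enlarging the base can only help non-forking, but the step itself is fine.)

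The hard direction ($\Rightarrow$) has two gaps, the second of which is fatal as written. First, extracting an $M$-indiscernible sequence from a long list of copies of the \emph{pair} $(c,d)$ and ``thinning'' produces an indiscernible sequence of pairs; it does not produce a single sequence $(a_i)_{i<\kappa}$ of elements of sort $y$ with $(a_i,a_j)\equiv_M (c,d)$ for all $i<j$. That conclusion is equivalent to saying $(c,d)$ itself starts an $M$-indiscernible sequence, which is not automatic from $c\equiv_M d$. What one actually does (as the paper itself does in the proof of the proposition following this Fact) is use that over a model $M$, $c\equiv_M d$ implies $c$ and $d$ have bounded Lascar distance, so they are joined by a finite chain of pairs each beginning an $M$-indiscernible sequence; since $p$ distinguishes the two endpoints by $\phi$, it must distinguish some consecutive link, and one may replace $(c,d)$ by that link. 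Only then does the desired long indiscernible sequence exist.

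Second, and decisively, the claim that ``every finitely-supported permutation $\pi$ of $\kappa$ lifts to some $\sigma_\pi\in\Aut(\U/M)$ sending $a_k\mapsto a_{\pi(k)}$'' is false for a general $M$-indiscernible sequence. Indiscernibility only guarantees that \emph{order-preserving} re-indexings are elementary; a transposition $0\leftrightarrow j$ reverses the order of $(a_0,a_j)$, and in general $\tp(a_0 a_j/M)\neq\tp(a_j a_0/M)$ (already in $\mathrm{DLO}$, where the sequence is strictly increasing). Total indiscernibility of every $M$-indiscernible sequence is essentially a stability hypothesis, which is not available here. Since your entire counting argument---producing $\kappa$ distinct conjugates via the transpositions $\pi_j$---rests on these $\sigma_{\pi_j}$ being automorphisms, the argument does not go through. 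Replacing transpositions by order-preserving shifts $a_i\mapsto a_{\alpha+i}$ does give automorphisms and yields conjugates $p_\alpha$ with $\phi(x,a_\alpha),\neg\phi(x,a_{\alpha+1})\in p_\alpha$, but showing that $\kappa$-many of these are pairwise distinct is genuinely delicate (one has to rule out periodic patterns), and is not the ``routine automorphism bookkeeping'' suggested; this is precisely where the work in Adler's proof lies. The paper itself does not supply a proof and simply cites \cite{Adler}, so there is no in-text argument to compare against, but as written your hard direction is not correct.
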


Thus, from \cref{fac:forking in NIP} we get that:
\begin{fact}
  If $T$ is NIP then non-forking is bounded. 
\end{fact}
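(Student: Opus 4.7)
The plan is to read off the result directly from the Adler equivalence stated just above, using \cref{fac:forking in NIP} as the sole nontrivial input. By that equivalence, it suffices to verify that in an NIP theory a global type $p \in S(\U)$ does not fork over a model $M$ if and only if $p$ is $M$-invariant, since for a model $M$, Lascar-non-splitting over $M$ is exactly $M$-invariance.

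For the forward direction, I would apply \cref{fac:forking in NIP} separately to each restriction $p|_\phi \in S_\phi(\U)$: if $p$ does not fork over $M$, then neither does $p|_\phi$, so $p|_\phi$ is $M$-invariant by \cref{fac:forking in NIP}. Since $M$-invariance of each $\phi$-type is equivalent to $M$-invariance of the full global type, this gives what we need. The converse is the easy direction and I would record it briefly: if $p$ is $M$-invariant and $\phi(x,c) \in p$, then for any $M$-indiscernible sequence $\sequence{c_i}{i<\omega}$ starting at $c$, invariance forces $\phi(x,c_i) \in p$ for every $i$, so $\set{\phi(x,c_i)}{i<\omega}$ is consistent, meaning $\phi(x,c)$ does not divide (hence does not fork, since a type that does not divide never forks as a single formula) over $M$.

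Combining the two directions yields the model version of the Lascar-non-splitting characterization, and the Adler fact just quoted then transfers this to boundedness of non-forking (with the function $f$ controlled by the number of Lascar strong types over parameter sets). The only slight subtlety I anticipate is making sure the passage from $\phi$-by-$\phi$ invariance to full invariance is spelled out, and noting that the Adler equivalence already packages the reduction from arbitrary parameter sets $C$ to models for us, so no extension argument needs to be redone here.
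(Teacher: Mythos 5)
Your proposal is correct and follows exactly the route the paper intends: the paper gives no written proof, deducing the Fact directly from \cref{fac:forking in NIP} together with the Adler equivalence, and your proof simply fills in that deduction. One small caveat on the converse direction: the parenthetical ``a type that does not divide never forks as a single formula'' is misleading as phrased, since for an individual formula not-dividing does \emph{not} in general imply not-forking; what you actually need (and what does hold) is that a \emph{global complete type} which does not divide over $M$ cannot fork over $M$, because any forking formula in it would imply a finite disjunction of dividing formulas, and completeness of the global type forces one of those disjuncts to lie in the type.
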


Say that non-forking is \defn{strongly bounded} if whenever $M \vDash T$, if $\phi(x,b)$ does not fork over $M$, then there is some $\psi(y) \in \tp(b/M)$ such that $\set{\phi(x,b')}{b' \in \psi(M)}$ is consistent. 

Justifying the name, we have: 
\begin{proposition}
  If non-forking is strongly bounded then non-forking is bounded.
\end{proposition}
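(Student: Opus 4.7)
My plan is to apply the characterization of boundedness quoted from \cite{Adler} just above the proposition: non-forking is bounded iff, for every model $M$, every global non-forking type over $M$ is $M$-invariant (``Lascar-non-splitting''). So, assuming non-forking is strongly bounded, I will show that any global $p(x)\in S(\U)$ non-forking over $M\vDash T$ is $M$-invariant. Suppose toward contradiction that some formula $\phi(x,y)$ and tuples $c\equiv_M d$ witness failure: $\phi(x,c)\in p$ and $\neg\phi(x,d)\in p$.

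The key move is to apply strong boundedness not to $\phi(x,c)$ alone, but to the product formula $\theta(x;y,z):=\phi(x,y)\land \neg\phi(x,z)$ at the single tuple parameter $(c,d)$. Since $\theta(x;c,d)\in p$, it does not fork over $M$, so strong boundedness yields $\psi(y,z)\in\tp(cd/M)$ such that the family $\set{\phi(x,c')\land \neg\phi(x,d')}{(c',d')\in\psi(M)}$ is consistent; fix a realization $a^*\in\U$. Now I exploit the symmetry coming from $c\equiv_M d$: set $\chi_1(y):=\exists z\,\psi(y,z)$ and $\chi_2(z):=\exists y\,\psi(y,z)$. Then $\chi_1\in\tp(c/M)$ (witnessed by $d$) and $\chi_2\in\tp(d/M)$ (witnessed by $c$), and since $\tp(c/M)=\tp(d/M)$, both formulas lie in the common type (viewing $y$ and $z$ as tuples of the same length and sorts, as they must be). By elementarity of $M\prec\U$, pick $e\in M$ realizing $\chi_1\land\chi_2$; then there exist $c',d'\in M$ with $\psi(c',e)$ and $\psi(e,d')$ true in $M$. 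Applying the consistency of the family at $(c',e)\in\psi(M)$ yields $\neg\phi(a^*,e)$, and at $(e,d')\in\psi(M)$ yields $\phi(a^*,e)$, the desired contradiction.

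The argument is short and I do not foresee a genuine obstacle; the single idea that has to be noticed is that strong boundedness should be fed the product formula $\phi(x,y)\land\neg\phi(x,z)$ rather than $\phi(x,y)$ alone, so that the two ``roles'' of the parameter can be collapsed to a single witness $e\in M$ by means of $c\equiv_M d$ and elementarity. Everything else is bookkeeping on top of Adler's equivalence between boundedness and $M$-invariance of non-forking.
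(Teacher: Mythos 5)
Your proof is correct, and it takes a genuinely different route from the paper's. Both arguments start the same way (negate boundedness via Adler's characterization, obtain $c\equiv_M d$ with $\phi(x,c)\land\neg\phi(x,d)$ non-forking, and feed the product formula $\phi(x,y)\land\neg\phi(x,z)$ into strong boundedness), and both exploit an ``overlapping middle element'' to force the contradiction $\phi(a^*,e)\land\neg\phi(a^*,e)$. The difference is how the overlap is produced. The paper uses the Lascar-distance-2 fact: it chooses $e$ realizing a global coheir of $\tp(c/M)$ over $Mcd$, extends to an $M$-indiscernible sequence $(c_0,c_1,c_2,\dots)$, and then uses the pairs $(c_0,c_1)$ and $(c_1,c_2)$, pulling the inconsistency down into $M$ by elementarity. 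You instead avoid coheirs and indiscernible sequences entirely: you project the strong-boundedness witness $\psi(y,z)$ to $\chi_1(y)=\exists z\,\psi(y,z)$ and $\chi_2(z)=\exists y\,\psi(y,z)$, observe that both land in the common type $\tp(c/M)=\tp(d/M)$, and use elementarity of $M$ once to produce the overlapping $e\in M$ together with the flanking $c',d'\in M$. This is arguably more elementary and transparent: no appeal to coheir extensions, no construction of an indiscernible sequence, just first-order reasoning about types and projections. Both proofs are short and correct; yours trades the paper's appeal to Lascar distance for a purely definability-theoretic argument, which is a nice simplification.
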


\begin{proof}
  Suppose that non-forking is not bounded. Then there is a model $M \vDash T$, a formula $\phi(x,y)$ over $M$ and $c,d \in M^y$ such that $\phi(x,c) \land \neg \phi(x,d) \in p$ while $c \equiv_M d$. Recall that $c,d$ have Lascar distance at most 2: for some $e$, both $(c,e)$ and $(d,e)$ start an indiscernible sequence (by e.g., letting $e$ realize a global coheir extension of $\tp(c/M)$ over $Mcd$). So there is an $M$-indiscernible sequence $\sequence{c_i}{i<\omega}$ such that $\phi(x,c_0) \land \neg \phi(x,c_1)$ does not fork over $M$. Let $q(y_0,y_1)= \tp(c_0,c_1/M)$ and let $\psi(x,y_0,y_1)=\phi(x,y_0)\land \neg \phi(x,y_1)$. Then clearly $(c_0 c_1), (c_1 c_2) \vDash q$ but $\psi(x,c_0,c_1), \psi(x,c_1,c_2)$ is inconsistent, and in particular there is no formula $\psi(y) \in q$ witnessing strong boundedness.
\end{proof}

\begin{corollary}
  If non-forking is strongly bounded and is NTP$_2$, then $T$ is NIP. 
\end{corollary}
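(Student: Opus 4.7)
The plan is to argue by contradiction: assume that $T$ is NTP$_2$ and non-forking is strongly bounded, yet some formula $\phi(x,y)$ has IP. By the preceding Proposition, non-forking is bounded, so by Adler's characterization (together with the fact that over a model the Lascar strong type coincides with the type) a global type is non-forking over $M$ exactly when it is $M$-invariant. Using IP together with Ramsey and compactness, I fix $M \vDash T$, an $M$-indiscernible sequence $\sequence{a_i}{i<\omega}$ of $y$-tuples, and some $b \in \U$ with $\phi(b,a_i)$ iff $i$ is even.

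Mirroring the Proposition's proof, set $\sigma(x,y_0,y_1) := \phi(x,y_0) \wedge \neg \phi(x,y_1)$ and $c := (a_0, a_1)$; then $b \vDash \sigma(x,c)$. The ``easy half''---the failure of the conclusion of strong boundedness at $\sigma(x,c)$---is handled as in the Proposition: by $M$-indiscernibility $(a_0,a_1) \equiv_M (a_1,a_2)$, so every $\psi(y_0,y_1) \in \tp(c/M)$ is realized by both pairs, while $\sigma(x,a_0,a_1) \wedge \sigma(x,a_1,a_2)$ is identically inconsistent (it contains $\phi(x,a_1) \wedge \neg \phi(x,a_1)$); elementarity of $M$ in $\U$ produces $a'_0, a'_1, a'_2 \in M$ with $(a'_0, a'_1), (a'_1, a'_2) \in \psi(M)$, so $\set{\sigma(x,c')}{c' \in \psi(M)}$ is inconsistent for every $\psi \in \tp(c/M)$.

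The substantive step is to verify the hypothesis of strong boundedness: that $\sigma(x,c)$ does not fork over $M$. Here NTP$_2$ is essential. By \cref{fact:forking=dividing} non-forking over $M$ equals non-dividing, and by \cref{fact:strict MS witnesses} any strict Morley sequence is a universal witness for dividing; so it suffices to exhibit a strict Morley sequence over $M$ starting with $c$ whose $\sigma$-family is consistent. Adapting \cref{fact:non-coforking coheir}, I build a global type $w(x, y_0, y_1)$ extending $\tp(b, c/M) \cup \{\phi(b,y_0), \neg\phi(b,y_1)\}$ that is finitely satisfiable in $M$ and strict over $M$; its $(y_0,y_1)$-reduct $w'$ is then $M$-strictly-invariant (any $(y_0^*,y_1^*)$ realizing $w'|_C$ extends by completeness of $w$ to a triple realizing $w|_C$, whence strictness transfers). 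A Morley sequence $\sequence{(b_i, c_i)}{i<\omega}$ of $w$ over $M$ with $(b_0, c_0) = (b, c)$ projects to a strict Morley sequence $\sequence{c_i}{i<\omega}$ starting with $c$; for $i \geq 1$ the presence of $b = b_0$ in the parameter set of $w|_{Mb_{<i}c_{<i}}$ forces $\phi(b, c_{i,0}) \wedge \neg\phi(b, c_{i,1})$, so $b$ is a common realization of $\set{\sigma(x, c_i)}{i < \omega}$.

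Combining, $\sigma(x, c)$ does not divide over $M$, hence does not fork; strong boundedness therefore yields $\psi \in \tp(c/M)$ with $\set{\sigma(x,c')}{c' \in \psi(M)}$ consistent, contradicting the easy half. The main obstacle is the construction of the strict coheir $w$: \cref{fact:non-coforking coheir} as stated only produces a strict coheir extending a complete type over $M$, so I will need to adapt its NTP$_2$-based proof to incorporate the extra formulas $\phi(b,y_0), \neg\phi(b,y_1)$ whose parameter $b$ is outside $M$, verifying that they neither spoil finite satisfiability in $M$ nor block the inductive construction used to secure strict invariance.
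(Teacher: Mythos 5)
Your proposal takes a genuinely different route from the paper. The paper's proof is a one-line reduction: the preceding Proposition gives that strong boundedness implies boundedness, and then \cite[Theorem 4.3]{forkingNTP2} (NTP$_2$ plus bounded non-forking implies NIP) finishes. You instead attempt a self-contained argument that, in effect, re-derives that cited theorem. The ``easy half'' of your argument is correct: by $M$-indiscernibility $\psi(a_0,a_1)\wedge\psi(a_1,a_2)$ holds, so $M\preceq\U$ supplies $a_0',a_1',a_2'\in M$ with $\psi(a_0',a_1')\wedge\psi(a_1',a_2')$, and $\sigma(x,a_0',a_1')\wedge\sigma(x,a_1',a_2')$ is inconsistent; hence $\set{\sigma(x,c')}{c'\in\psi(M)}$ is inconsistent for every $\psi\in\tp(c/M)$.

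The gap is in the ``substantive step'', and you flag it yourself without closing it. You need a global type $w(x,y_0,y_1)$ extending $\tp(b,c/M)\cup\{\phi(b,y_0),\neg\phi(b,y_1)\}$ that is finitely satisfiable in $M$ and strict over $M$, but this is not a routine adaptation of \cref{fact:non-coforking coheir}. First, it is not clear that $\tp(b,c/M)\cup\{\phi(b,y_0),\neg\phi(b,y_1)\}$ is even finitely satisfiable in $M$: that would require, for every $\chi\in\tp(b,c/M)$, elements $m_x,m_0,m_1\in M$ with $\chi(m_x,m_0,m_1)\wedge\phi(b,m_0)\wedge\neg\phi(b,m_1)$. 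The witnesses $a_0,a_1$ to both truth values of $\phi(b,\cdot)$ live outside $M$, and since $b\notin M$, elementarity of $M$ in $\U$ does not transfer the existential statement with parameter $b$ down to $M$. Second, the Chernikov--Kaplan construction of strict coheirs pins down only the restriction to $M$ and freely modifies the rest of the global type during its Zorn/iterative step; ensuring the $b$-parameterized formulas survive that process is precisely the additional strength their theorem does not claim.

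A further warning sign that the substantive step cannot be obtained from the quoted facts alone: once non-forking is bounded, a global type non-forking over $M$ is $M$-invariant (by Adler's characterization, as you note), and since $a_0\equiv_M a_1$ no $M$-invariant global type contains $\phi(x,a_0)\wedge\neg\phi(x,a_1)$. So under your standing assumptions $\sigma(x,c)$ in fact forks over $M$ --- the very claim you aim to negate. Of course, since the assumptions are jointly contradictory this is exactly the contradiction one seeks, but it shows that establishing ``$\sigma(x,c)$ does not fork'' from NTP$_2$ and the indiscernible data alone carries the entire content of the cited theorem. As written, your proof has a genuine, unclosed gap; the paper's citation is the short way through.
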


\begin{proof}
  This follows directly from \cite[Theorem 4.3]{forkingNTP2}.
\end{proof}

\begin{example}
  There is an IP (and thus TP$_2$) theory $T$ with strongly bounded non-forking. Indeed, by \cite[Corollary 5.24]{Sh1007}, there is such a theory where a global type $p$ does not fork over a model $M$ iff $p$ is finitely satisfiable in $M$. 

  Now, suppose that $\phi(x,b)$ does not fork over a model $M$. Then $\phi(M,b) \neq \emptyset$, so let $m\in M^x$ be such that $\phi(m,b)$ holds. Let $\psi(y) = \phi(m,y)$. Then clearly the set $\set{\phi(x,b')}{M\vDash \psi(b')}$ is consistent (realized by $m$) as required. 
\end{example}

\begin{question}
  Suppose that non-forking is bounded. Is it also strongly bounded?
\end{question}

\bibliographystyle{alpha}
\bibliography{definablepq}

\end{document}